\newcommand{\showcomments}{yes}
\renewcommand{\showcomments}{no}
\newsavebox{\commentbox}
\newenvironment{com}%
\newtheorem*{rep@thm}{\rep@title}
\newcommand{\newreptheorem}[2]{%
\newenvironment{rep#1}[1]{%
 \def\rep@title{#2 \ref{##1}}%
 \begin{rep@thm}}%
 {\end{rep@thm}}}
\newtheorem{thm}{Theorem}[section]
\newtheorem{lem}[thm]{Lemma}
\newtheorem{cor}[thm]{Corollary}
\newtheorem{prop}[thm]{Proposition}
\newtheorem*{thmno}{Illustrative theorem}
\theoremstyle{definition}
\newtheorem{defn}[thm]{Definition}
\newtheorem{rem}[thm]{Remark}
\newtheorem{construction}[thm]{Construction}
\DeclareMathOperator{\diam}{diam}
\DeclareMathOperator{\Comp}{Comp}
\DeclareMathOperator{\im}{im}
\DeclareMathOperator{\cd}{cd}
\newcommand{\p}{\textup{\textsf{p}}}
\newcommand{\dist}{\textup{\textsf{d}}}
\newcommand{\systole}[1]{\ensuremath{\| #1 \|}}
\newcommand{\nclose}[1]{\ensuremath{\langle\!\langle\, #1\,\rangle\!\rangle}}
\begin{document}

\title{%(Relative) 
Hyperbolicity in non-metric cubical small-cancellation}

\author{Macarena Arenas}
\author{Kasia Jankiewicz}
\author[D.~T.~Wise]{Daniel T. Wise}
	\address{Clare College \\
            University of Cambridge \\
            United Kingdom  CB2 1TL and \\
            Centre for Mathematical Sciences\\
			University of Cambridge\\
			United Kingdom CB3 0WB}
	\email{mcr59@cam.ac.uk}
	\address{Department of Mathematics\\
			University of California\\
			Santa Cruz, USA 95064}
	\email{kasia@ucsc.edu}
	\address{Department of Mathematics \& Statistics\\
                    McGill University \\
                    Montreal, Quebec, Canada H3A 0B9}
           \email{wise@math.mcgill.ca}
\subjclass[2010]{20F06, 20F67, 20F65}
\keywords{Small Cancellation, Cube Complexes, Hyperbolic Groups}
%\date{\today}

%\begin{com}
%{\bf \normalsize COMMENTS\\}
%ARE\\
%SHOWING!\\
%\end{com}

\begin{abstract}
Given a non-positively curved cube complex $X$, we prove that the quotient of $\pi_1X$ defined by a cubical presentation $\langle X\mid Y_1,\dots, Y_s\rangle$ satisfying sufficient non-metric cubical small-cancellation conditions is hyperbolic provided that $\pi_1X$ is  hyperbolic. This  generalises  the fact that finitely presented classical $C(7)$ small-cancellation groups are hyperbolic.
 \end{abstract}

\maketitle

\section{Introduction}
A cubical presentation is a higher dimensional generalisation of a classical group presentation in terms of generators and relators. A non-positively curved cube complex $X$ plays the role of the ``generators'', and the ``relators" are local isometries of non-positively curved cube complexes $Y_i\hookrightarrow X$. The associated group is  the quotient of  $\pi_1 X$ by the normal closure $\nclose{\{\pi_1 Y_i\}}_{\pi_1X}$ of $\pi_1 Y_i$. As in the classical setting, this group is the fundamental group of $X$ with the $Y_i$'s coned off. 
Likewise, cubical small-cancellation theory, introduced in~\cite{WiseIsraelHierarchy}, is a generalisation of classical small-cancellation theory (see e.g.\ \cite{LS77}). In both the classical and cubical cases, the small-cancellation conditions are expressed in terms of pieces. A \emph{piece} in a classical presentation is a word that appears in two different places among the relators. 
The \emph{non-metric small-cancellation condition $C(p)$} where $p>1$ asserts that no relator is a concatenation of fewer than $p$ pieces.
The \emph{ metric small-cancellation condition $C'(\alpha)$} $\alpha\in (0,1)$ asserts that $|P|< \alpha |R|$ whenever $P$ is a piece in a relator $R$. 
Note that $C'(\frac{1}{p})\Rightarrow C(p+1)$. Pieces in cubical presentation are defined similarly, and the same implication holds in the cubical case.

Cubical small-cancellation has proven to be a fruitful tool in the study of groups acting on CAT(0) cube complexes. It was used by Wise as a step in his proof of the Malnormal Special Quotient Theorem~\cite{WiseIsraelHierarchy}, and as such, played a crucial role in the proofs of the Virtual Haken and Virtual Fibering conjectures~\cite{AgolGrovesManning2012}. Cubical presentations and cubical small-cancellation theory were also studied and utilised in~\cite{Arenas2023pi2, Arenas22,  ArzhantsevaHagen16, FuterWise2016, HuangWiseSpecial, Jankiewicz17,  JankiewiczWise18, PW18}. 
While classical small cancellation groups have virtual cohomological dimension $\leq 2$ ~\cite{Lyn66}, there exist cubical small cancellation groups with arbitrarily large virtual cohomological dimension, which is moreover controlled by $\cd\pi_1 X$ and $\cd\pi_1 Y_i$ ~\cite{Arenas2023pi}.

To illustrate the difference between metric and non-metric conditions, consider the following presentation: $\langle a,b \mid a^n w \rangle$. When $w$ is a long messy word (read: small cancellation) starting and ending in $b$, then the $C(p)$ condition holds for all $n$. However $a^{n-1}$ is a piece! So $C'(\alpha)$ fails for sufficiently large $n$.
Similar examples can be produced in the cubical setting. 
For instance, let $X= S \vee A$ where $S$ is a cubulated surface and $A$ is a circle. Let $w$ be a small-cancellation path in $X$ whose initial and terminal edges lie in $S$.  
Let $\widetilde w$ be the lift of $w$ to $\widetilde X$, and let $\widetilde W$ be the combinatorial convex hull of $\widetilde w$. 
Let $a^n$ be a length $n$ arc that immerses onto $A$.
Let $Y$ be the quotient of $a^n\cup \widetilde W$, identifying the endpoints of $a^n$ and  $\widetilde w$. Then $\langle X\mid Y\rangle$ is a cubical presentation satisfying $C(p)$, but not $C'(\frac 1 p)$, when $n\gg 0$. The pumping lemma shows that for any $X$ with $\pi_1X$ non-elementary hyperbolic, there are presentations $X^*$ that are non-metric small cancellation, but not metric small cancellation.

It is a fundamental result of classical small-cancellation theory that a group admitting a finite presentation satisfying the classical $C'(\frac 16)$ or $C(7)$ condition is hyperbolic. 
In analogy with the metric classical small-cancellation case, a cubical $C'(\frac {1} {14})$ presentation $ \langle X \mid Y_1,\dots, Y_s\rangle$ yields a hyperbolic group if   $\pi_1X$ is hyperbolic and the $Y_i$ are compact~\cite[Thm 4.7]{WiseIsraelHierarchy}. However, the proof of that result does not extend to the non-metric case.
The goal of this paper is to prove the following statement which  recovers and generalises the result from the $C'(\frac{1}{14})$ setting. 

\begin{repthm}{thm: hyperbolicity of C(p)}
Let $X^*= \langle X \mid Y_1,\dots, Y_s\rangle$ be a cubical presentation satisfying the $C(p)$ cubical small-cancellation condition for $p\geq 14$, 
where $X, Y_1, \dots, Y_s$ are compact, and $\pi_1X$ is hyperbolic. 
Then $\pi_1 X^*$ is hyperbolic.
\end{repthm}

\subsection{Proof strategy} \label{subsec:strategy}
The main ingredients of the proof are the notion of the piece metric (Definition~\ref{defn:piece metric}) and Papasoglu's thin bigon criterion for hyperbolicity (Proposition~\ref{prop:thin bigon criterion}).

The most immediate way of proving hyperbolicity for finitely presented $C'(\frac{1}{6})$ groups is to show that a linear isoperimetric inequality holds for their Cayley complexes. This follows from  the fact that $C'(\frac{1}{6})$ presentations satisfy Dehn's algorithm by Greendlinger's Lemma (see for instance~\cite[V.4.5]{LS77}). 
In the $C(7)$ setting, one is no longer guaranteed to have a Dehn presentation (consider the $\langle a,b | a^n w \rangle$ examples described above).
 Instead, the usual way of proving hyperbolicity in this generality relies on the combinatorial Gauss-Bonnet Theorem. 
Another way is to realise that reduced disc diagrams satisfy $C'(\frac{1}{6})$ if we regard all pieces as having length~$1$. In fact, this viewpoint leads to the piece-metric. 

To illustrate the basic idea behind our  strategy, we sketch a proof of hyperbolicity in the $C(7)$ case using the piece-metric and the thin bigon criterion.
The definition of the classical $C(n)$ condition and of all the diagrammatic notions introduced for cubical presentations in Section~\ref{sec:cubical presentations} can be particularised to this setting, and a version of Greendlinger's Lemma  also holds (see for instance~\cite[V.4.5]{LS77}).

\begin{thmno}  Let $X$ be a  $2$-complex satisfying the  $C(7)$ small-cancellation condition. Then $X$ is hyperbolic with the piece metric.
\end{thmno}

This implies hyperbolicity of finitely presented $C(7)$ groups, since in that case the piece-metric and the usual  combinatorial metric on the Cayley graph are quasi-isometric (see Proposition~\ref{prop: qi}).

\begin{proof}
We check that all bigons in $X^{(0)}$ are $\epsilon$-thin in the piece metric for some $\epsilon>0$, and apply Proposition~\ref{prop:thin bigon criterion} to conclude that $X$ is hyperbolic.

 Let $\gamma_1, \gamma_2$ be piece-geodesics forming a bigon in $X$, and let $D\rightarrow X$ be a reduced disc diagram with $\partial D=\gamma_1\bar\gamma_2$. We claim that $D$ is a (possibly degenerate) ladder, and hence that the bigon $\gamma_1, \gamma_2$ is $1$-thin, since by definition any two cells in a ladder intersect along at most $1$ piece.

 Indeed, by Greendlinger's Lemma, $D$ is either a ladder, or contains at least $3$ shells and/or spurs. First note that $D$ cannot have spurs, as these can be removed to obtain  paths $\gamma'_1, \gamma'_2$ with the same endpoints as $\gamma_1, \gamma_2$, and which are shorter in the piece metric, thus contradicting that $\gamma_1, \gamma_2$ are piece-geodesics.
 
 If $D$ has at least $3$ shells, then at least $1$ shell $S$ must have its outerpath  $Q$ contained in either $\gamma_1$ or $\gamma_2$. Since both cases are analogous, assume $Q \subset\gamma_1$, and  let $R$ be the innerpath of $S$. 
 Since $S$ is a shell and $X$ satisfies $C(7)$, then $R$ is the concatenation of at most $3$ pieces, so $|R|_\p < |Q|_\p$, and the path $\gamma_1'$ obtained from $\gamma_1$ by traversing $R$ instead of $Q$ is the concatenation of less pieces than  $\gamma_1$, contradicting that $\gamma_1$ is a piece-geodesic.
 
 Thus, $D$ is a ladder, and the proof is complete. 
\end{proof}

\subsection{Structure of the paper}
The paper is organised as follows. In Section 2, we give background on cube complexes, cubical group presentations, and cubical small-cancellation. In Section 3, we recall a criterion for hyperbolicity for groups acting on graphs. In Section 4, we define and analyse the piece metric. In Section~5, we prove Theorem~\ref{thm: hyperbolicity of C(p)}.

\subsection*{Acknowledgement}
We thank the referee for helpful comments. The first author was supported by a Cambridge Trust \& Newnham College Scholarship, and by the Denman Baynes Junior Research Fellowship. The second author was supported by the NSF grants DMS-2203307 and DMS-2238198. The third author was supported by NSERC.

\section{Cubical background}\label{sec:background}
\subsection{Non-positively curved cube complexes}
We assume that the reader is familiar with \emph{CAT(0) cube complexes}, which are CAT(0) spaces
having cell structures where each cell is isometric to a cube. We refer the reader to \cite{BridsonHaefliger, Sageev95, Leary_KanThurston, WiseIsraelHierarchy}. A \emph{non-positively curved cube complex} is a cell-complex $X$ whose universal cover $\widetilde X$ is a CAT(0) cube complex. A \emph{hyperplane}  $\widetilde H$ in $\widetilde X$ is a subspace
whose intersection with each $n$-cube $[0,1]^n$ is either empty or consists of the subspace where exactly one coordinate is restricted to $\frac12$.
For a hyperplane $\widetilde{H}$ of $\widetilde{X}$, we let $N(\widetilde{H})$ denote its \emph{carrier}, which is the union of all closed cubes intersecting $\widetilde{H}$. 
The \emph{combinatorial metric} $\dist$ on the $0$-skeleton of a non-positively curved cube complex $X$ is a length metric
where the distance between two points is the length of the shortest combinatorial path connecting them.
A map $\phi:Y\rightarrow X$ between non-positively curved cube complexes is a \emph{local isometry}
if $\phi$ is locally injective, $\phi$ maps open cubes homeomorphically to open cubes,
and whenever $a,b$ are concatenable edges of $Y$,
 if $\phi(a)\phi(b)$ is a subpath of the attaching map of a 2-cube of $X$,
 then $ab$ is a subpath of a 2-cube in $Y$.

\subsection{Cubical presentations}\label{sec:cubical presentations} We recall the notion of a cubical presentation, and the cubical small-cancellation conditions from \cite{WiseIsraelHierarchy}.

A \emph{cubical presentation} $\langle X \mid Y_1, \ldots, Y_m \rangle$ consists of a non-positively curved cube complex $X$, and a set of local isometries $Y_i \looparrowright X$ of non-positively curved cube complexes. We use the notation $X^*$ for the cubical presentation above. As a topological space, $X^*$ consists of $X$ with a cone on $Y_i$ attached to $X$ for each $i$. The vertices of the cones on $Y_i$'s will be referred to as \emph{cone-vertices} of $X^*$. The cellular structure of $X^*$ consists of all the original cubes of $X$, and the ``pyramids" over cubes in $Y_i$ with a cone-vertex for the apex. 

As mentioned in the introduction, cubical presentations generalise classical group presentations. Indeed, a classical presentation complex associated with a group presentation $G = \langle S \mid R\rangle$ can be viewed as a cubical presentation where the non-positively curved cube complex $X$ is just a wedge of circles, one corresponding to each generator in $S$. The complexes $Y_i$ correspond to relators $r_i$ in $R$. Each cycle $Y_i$ has length $|r_i|$, and the local isometry $Y_i\looparrowright X$ is defined by labelling the edges of $Y_i$ with the letters of $r_i$.

The universal cover $\widetilde{X^*}$ consists of a cube complex $\widehat X$ with cones over copies of $Y_i$'s. The complex $\widehat X$ is a covering space of $X$. A \emph{combinatorial geodesic} in $\widetilde{X^*}$ is a combinatorial geodesic in $\widehat X$, viewed as  a path in $\widetilde{X^*}$.

\subsection{Disc diagrams in $X^*$}

Throughout this paper, we will be analysing properties of \emph{disc diagrams}, which we introduce below together with some associated terminology: 

A map $f: X \longrightarrow Y$ between 2-complexes is \emph{combinatorial} if it maps cells to cells of the same dimension.
A complex is \emph{combinatorial} if all attaching maps are combinatorial, possibly after subdividing the cells. 

A \emph{disc diagram} is a compact, contractible $2$-complex $D$ with a fixed planar embedding $D\subseteq \mathbb S^2$. The embedding $D\hookrightarrow \mathbb S^2$ induces a cell structure on $\mathbb S^2$, consisting of the $2$-cells of $D$ together with an additional 2-cell, which is the 2-cell at infinity when viewing $\mathbb S^2$ as the one point compactification of $\mathbb R^2$. The \emph{boundary path} $\partial D$ of $D$ is the attaching map of the 2-cell at infinity.
Similarly, an \emph{annular diagram} is a compact $2$-complex $A$  with a fixed planar embedding $A\subseteq \mathbb S^2$ and the homotopy type of $\mathbb S^1$.  The annular diagram $A$ has two boundary cycles $\partial_{in} A$ $\partial_{out} A$. 
A \emph{disc diagram in $X^*$} is a combinatorial map $(D,\partial D)\to(X^*, X^{(1)})$ of a disc diagram. 
The $2$-cells of a disc diagram $D$ in $X^*$ are of two kinds: squares mapping onto squares of $X$, and triangles mapping onto cones over edges contained in $Y_i$. 
The vertices in $D$ which are mapped to the cone-vertices of $X^*$ are also called the \emph{cone-vertices}. 
Triangles in $D$ are grouped into cyclic families meeting around a cone-vertex. 
We refer to such families as \emph{cones}, and treat a whole such family as a single $2$-cell. 
A \emph{cone-cell} $C$ is the union of an annular square diagram $A\to D$ whose interior embeds in $D$, together with a cone over $\partial_{in}A$. See Figure~\ref{fig:cone-cell}.

\begin{figure}
\includegraphics[scale=0.4]{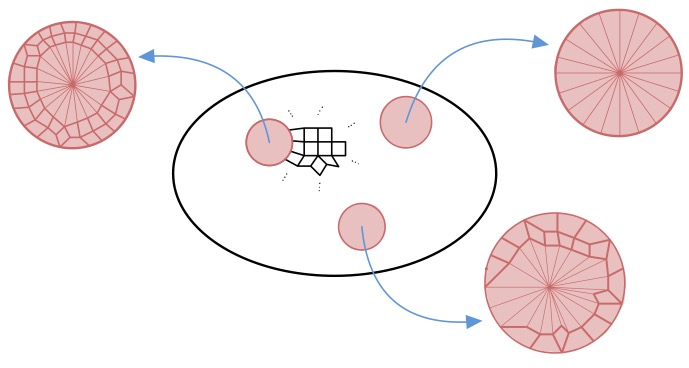}
\caption{Cone-cells in a disc diagram. In figures we will often omit the cell structure of cone-cells, unless needed.}\label{fig:cone-cell}
\end{figure}

We emphasize that this definition differs slightly from the definition of a cone-cell in the literature, where $A$ is simply a circle; allowing $A$ to be an arbitrary annular diagram, $D$ implicitly comes  equipped with a \emph{choice} of cone-cells.

The \emph{square part} $D_{\square}$ of $D$ is a subdiagram which is the union of all the squares that are not contained in cone-cells.

A \emph{square disc diagram} is a disc diagram whose square part is the whole diagram, i.e.\ it contains no cone-cells. A \emph{mid-interval} in a square, viewed as $[0,1]\times[0,1]$, is an interval $\{\frac 12\}\times [0,1]$ or $[0,1]\times \{\frac 1 2\}$. 
A \emph{dual curve} in a square disc diagram $D$ is a curve which intersect each closed square either trivially, or along a mid-interval, i.e., a dual curve is a restriction of a hyperplane in $X$ to $D$. We note that for each $1$-cube of $D$, there exists a unique dual curve crossing it~\cite[2e]{WiseIsraelHierarchy}.

The \emph{complexity} of a disc diagram $D$ in $X^*$ is defined as $$\Comp(D) = (\# \text{cone-cells}, \#\text{squares in $D_\square$}).$$ We say that $D$  has \emph{minimal complexity} if $\Comp(D)$ is minimal in the lexicographical order among disc diagrams with the same boundary path as $D$. A disc diagram $D$ in $X^*$ is \emph{degenerate} if $\Comp(D) = (0,0)$.
A disc diagram $D,$ in $X^*$ is \emph{singular} if $D$ is not homeomorphic to a closed ball in $\mathbb R^2$. This is equivalent to $D$ either being a single vertex or an edge, or containing a cut vertex. 
In particular, every degenerate disc diagram is singular.

A square $s$ is a \emph{cornsquare} on a cone-cell $C$ if a pair of dual curves emanating from consecutive edges $a, b$ of $c$ terminates on consecutive edges $a',b'$ of $\partial D$.

\begin{defn}[Reduction moves]\label{defn:reduction moves}
We define six types of reduction moves.
See Figure~\ref{fig:reduced}.
\begin{figure}
\includegraphics[scale = 0.38]{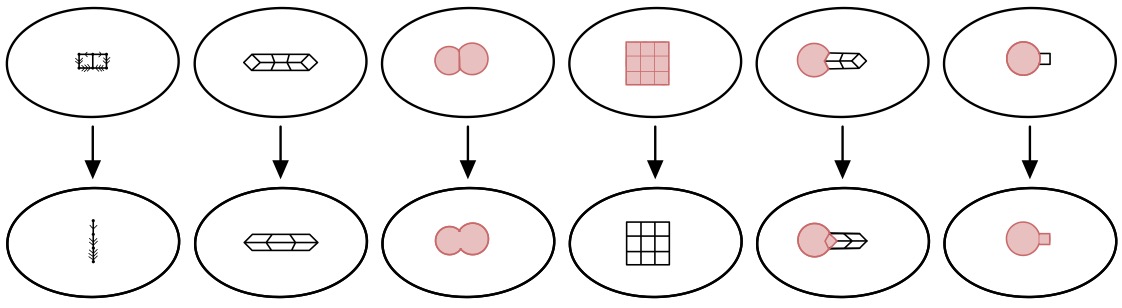}
\caption{The six reduction moves from Definition~\ref{defn:reduction moves}.}\label{fig:reduced}
\end{figure}
\begin{enumerate}[start=0]
\item \label{item:red0} Cancelling  a pair of squares $s,s'$ meeting at one edge $e$ in the disc diagram, whose map to $X^*$ factors through a reflection identifying them. That is, cutting out $e\cup Int(s)\cup Int(s')$ and then glueing together the paths $\partial s-e$ and $\partial s'-e$.
\item \label{item:red1} Replacing a minimal bigon-diagram, i.e.\ a disc subdiagram containing two dual curves intersecting each other twice, which is not contained in any other such subdiagram, with a lower complexity square disc diagram with the same boundary. 
\item \label{item:red3} Replacing a pair of adjacent cone-cells with a single cone-cell.
\item \label{item:red2} Replacing a cone-cell with a square disc diagram with the same boundary.
\item \label{item:red4} Absorbing a cornsquare $s$ to a cone-cell $C$, i.e.\ replace a minimal subdiagram containing $C$ and the two dual curves starting at $C$ and ending in $s$ with a lower complexity disc diagram with the same boundary and containing a cone-cell $C\cup s'$ for some square $s'$. 

\item \label{item:red5} Absorbing a square with a single edge in a cone-cell into the cone-cell. 
\end{enumerate}
\end{defn}

\begin{defn}[Reduced and weakly reduced disc diagram]
A disc diagram $D\to X^*$ in a cubical presentation is 
\begin{itemize}
\item \emph{reduced} if no moves \eqref{item:red0} - \eqref{item:red5} from Definition~\ref{defn:reduction moves} can be performed in $D$.
\item \emph{weakly reduced} if no moves \eqref{item:red1} - \eqref{item:red5} from Definition~\ref{defn:reduction moves} can be performed in $D$.
\end{itemize} \end{defn}
Note that if $D$ has minimal complexity then $D$ is reduced, and that, in particular, each reduction move outputs a diagram $D'$ with $Area(D')< Area(D)$ and $\partial D'=\partial D$. Consequently:

\begin{lem}\label{lem:reductions} Let $D\to X^*$ be a disc diagram, then there exist disc diagrams $D'\to X^*$ and $D''\to X^*$ satisfying:
\begin{enumerate}
\item $\partial D = \partial D' = \partial D''$,
\item $D'$ is weakly reduced and $D''$ is reduced,
\item $D'$ is obtained from $D$ after a a finite number of moves of types \eqref{item:red1}-\eqref{item:red5}, and $D''$ is obtained from $D$ after a finite number of moves of type \eqref{item:red0}-\eqref{item:red5}.
\end{enumerate}  
\end{lem}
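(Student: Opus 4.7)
The plan is to argue by induction on the area (equivalently, the lexicographic complexity $\Comp$), leveraging the monovariant asserted in the sentence immediately preceding the lemma statement: each reduction move produces a new diagram with strictly smaller area and identical boundary path. Starting from $D$, I would iterate the following loop to produce $D'$: if some move of type \eqref{item:red1}-\eqref{item:red5} applies to the current diagram, perform it to obtain a replacement; otherwise halt. Because area takes values in $\mathbb N$ and strictly decreases at each step, the loop terminates after at most $\mathrm{Area}(D)$ iterations. The halted diagram $D'$ then admits no further moves of types \eqref{item:red1}-\eqref{item:red5}, so by definition it is weakly reduced; by construction $\partial D'=\partial D$ and $D'$ is obtained from $D$ by finitely many moves of the required types. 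The construction of $D''$ is identical, permitting moves of type \eqref{item:red0} as well.

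The only substantive content, therefore, is the parenthetical remark preceding the lemma: that each individual reduction move (i) yields a bona fide disc diagram in $X^*$, (ii) preserves $\partial D$, and (iii) strictly decreases the area. For \eqref{item:red0} and \eqref{item:red1} this is standard for square disc diagrams in a non-positively curved cube complex: cancelling a reflected square pair or replacing a minimal bigon subdiagram by a smaller square diagram with the same boundary are classical operations going back to Sageev's thesis. For \eqref{item:red2}, the decrease is in the first coordinate of $\Comp$, since one cone-cell is removed (the second coordinate may increase, but the lexicographic complexity strictly decreases, which forces area to do the same under the chosen convention).

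The most delicate verifications are for moves \eqref{item:red3}, \eqref{item:red4}, and \eqref{item:red5}, which require showing that merging two adjacent cone-cells, absorbing a cornsquare, or absorbing a square sharing an edge with a cone-cell produces a valid cone-cell. Unpacking Definition of cone-cell, one must exhibit the resulting object as an annular square diagram together with a cone over its inner boundary whose image lies in some $Y_i$. This is where the local isometry hypothesis $Y_i\looparrowright X$ is essential: it ensures that square subdiagrams adjacent to the cone factor compatibly through $Y_i$, so the new inner boundary lifts to a closed combinatorial path in $Y_i$, and hence genuinely bounds a cone. I expect this to be the main (though still routine) technical point.

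With these verifications in place, the lemma follows immediately from the termination argument in the first paragraph, and no further combinatorial input is required.
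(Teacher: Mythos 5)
Your proposal matches the paper's own argument, which consists precisely of the termination-by-monovariant observation stated immediately before the lemma (the paper introduces the lemma with ``Consequently'' and gives no further proof). One small caution: since move~\eqref{item:red2} can increase the number of squares, the correct monovariant is the lexicographic complexity $\Comp$ rather than a literal cell count, so termination follows from the well-ordering of $\mathbb{N}^2$ under lexicographic order rather than in at most $\mathrm{Area}(D)$ iterations --- a subtlety you essentially acknowledge in your discussion of that move.
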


\begin{rem}
Many theorems about disc diagrams in the literature assume that the disc diagram is reduced or minimal complexity, but it is in fact sufficient to consider weakly reduced diagrams. For example, this is the case with Lemma~\ref{lem:Greendlinger} (the Cubical Greendlinger's Lemma).
\end{rem}

\subsection{Cubical small-cancellation}
%\begin{defn}[Pieces]
We use the convention where $\overline \rho$ denotes the path $\rho$ with the opposite orientation. 
A \emph{grid} is a square disc diagram isometric to the product of two intervals. 
Let $\rho$ and $\eta$ be two combinatorial paths in $\widetilde {X^*}$. We say $\rho$ and $\sigma$  are \emph{parallel} if there exists a grid $E \rightarrow \widetilde {X^*}$ with $\partial E = \mu\rho\overline\nu\overline\eta$, where the dual curves dual to edges of $\rho$, ordered with respect to its orientation, are also dual to edges of $\eta$, ordered with respect to its orientation.
Concretely, if $\rho=e_1\cdots e_k$ and $\sigma=f_1 \cdots f_k$, and  $h(e_i)$ and $h(f_i)$ are the curves dual to $e_i$ and $f_i$ respectively, then $\rho$ is a piece if $h(e_i)=h(f_i)$ for each $i \in \{1, \ldots, k\}$.

An \emph{abstract contiguous cone-piece} of $X^*$ in $Y_i$ is a component of $\widetilde{Y}_i \cap \widetilde{Y}_j$, where
$\widetilde{Y}_i$ is a fixed elevation of $Y_i$ to the universal cover $\widetilde {X}$,  and  either $i \neq j$ or where $i = j$
but $\widetilde{Y}_j \neq \widetilde{Y}_i$.
Each abstract contiguous cone-piece $P$ induces a map $P \rightarrow Y_i$ which is the composition $P \hookrightarrow \widetilde {Y}_i \rightarrow Y_i$, and a \emph{contiguous cone-piece} of $Y_j$ in $Y_i$ is a combinatorial path $\rho \rightarrow P$ in an abstract contiguous cone-piece of $Y_j$ in $Y_i$.
An \emph{abstract contiguous wall-piece} of $X^*$ in $Y_i$ is a component of $\widetilde{Y}_i \cap N(\widetilde{H})$, where $\widetilde{H}$ is a hyperplane that is disjoint from  $\widetilde{Y}_i$.
Each abstract contiguous wall-piece $P$ induces a map $P \rightarrow Y_i$, and a \emph{contiguous wall-piece} of $Y_i$ is a combinatorial path $\rho \rightarrow P$ in an abstract contiguous wall-piece of $Y_i$. 
A \emph{piece} is a path parallel to a contiguous cone-piece or wall-piece. 
\begin{figure}
\includegraphics[scale=0.35]{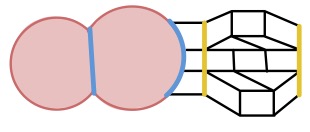}
\caption{Blue paths are contiguous pieces, and yellow paths are pieces but not contiguous pieces.}\label{fig: pieces}
\end{figure}
The difference between contiguous pieces and pieces is illustrated in Figure~\ref{fig: pieces}.

For an integer $p> 0$, we say $X^*$ satisfies the $C(p)$ \emph{small-cancellation} condition if no essential combinatorial closed path in $Y_i$ can be expressed as a concatenation of less than $p$ pieces.
For a constant $\alpha > 0$, we say $X^*$ satisfies the $C'(\alpha)$ \emph{small-cancellation} condition if
$\diam (P) < \alpha \systole{Y_i}$
for every piece $P$ involving $Y_i$.

Note that the $C'(\frac 1 p)$ condition implies the $C(p+1)$ condition. When $p\geq 9$ and $X^*$ is $C(p)$, then each immersion $Y_i \looparrowright X$ lifts to an embedding
  $Y_i \hookrightarrow \widetilde {X^*}$. This is proven in \cite[Thm~4.1]{WiseIsraelHierarchy} for $p\geq 12$,
  and in \cite{Jankiewicz17} for $p\geq 9$.

We record the following observation, a proof of which can be found in~\cite{Arenas2023pi2}.
\begin{lem}\label{lem:bound on size of pieces}
Let $X^* = \langle X \mid Y_1, \dots, Y_m\rangle$ be a cubical presentation where $X$ and  $Y_1, \dots, Y_m$ are compact non-positively curved cube complexes. If $X^*$ satisfies the cubical $C(p)$ condition for $p\geq 2$, then there is a bound on the combinatorial length of pieces of $X^*$.
\end{lem}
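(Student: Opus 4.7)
The plan is to bound the combinatorial diameter of each abstract contiguous piece, regarded as a subcomplex of $\widetilde{Y}_i$, and then to transfer this bound to arbitrary pieces via the grid in the definition of parallelism.

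The key observation is the following injectivity claim: every abstract contiguous cone-piece $C$, namely a component of $\widetilde{Y}_i \cap \widetilde{Y}_j$, embeds under the universal covering projection $q\colon \widetilde{Y}_i \to Y_i$. Suppose for contradiction that $x \neq y$ lie in $C$ with $q(x)=q(y)$. Since $q$ is the universal cover of $Y_i$, there is a nontrivial deck transformation $g\in \pi_1 Y_i$ with $g\cdot x = y$. Choose a combinatorial geodesic $\gamma \subseteq C$ from $x$ to $y$; then $q(\gamma)$ is a closed combinatorial path in $Y_i$ representing $g \neq 1$, hence essential. But by definition $\gamma$ is a contiguous cone-piece of $Y_j$ in $Y_i$, and so is a piece; thus $q(\gamma)$ is exhibited as a concatenation of a single piece, contradicting the $C(p)$ condition for $p\geq 2$. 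The identical argument, with components of $\widetilde{Y}_i \cap N(\widetilde{H})$ in place of $\widetilde{Y}_i \cap \widetilde{Y}_j$, shows that abstract contiguous wall-pieces also inject into $Y_i$.

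Once injectivity is in hand, the remainder is a counting argument. An abstract contiguous piece embeds combinatorially into some $Y_i$, so contains at most $|Y_i^{(0)}|$ vertices; since there are only finitely many compact $Y_i$'s, there is a uniform bound $N = N(X^*)$ on the combinatorial diameter of any such subcomplex. Any piece $\rho$ is parallel to a contiguous piece $\sigma$ via a grid $E$ which is isometric to a product of intervals, so opposite sides have equal length and $|\rho|=|\sigma|$. Being a side of the product $E$, the path $\sigma$ is non-backtracking, so $|\sigma|$ is bounded by the diameter of the abstract contiguous piece containing it, which is at most $N$; hence $|\rho|\leq N$ as well.

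The main obstacle is the first step: one must verify that the essential loop obtained from a hypothetical failure of injectivity really does count as a single piece in the $C(p)$ condition. This in turn rests on the easy but essential observation, immediate from the definitions, that every combinatorial subpath of a contiguous piece is itself a contiguous piece, and hence a piece of $X^*$.
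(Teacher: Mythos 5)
The paper itself does not prove Lemma~\ref{lem:bound on size of pieces}; it defers to \cite{Arenas2023pi2}, and your central idea is exactly the standard one used there: if an abstract contiguous piece $P$ failed to inject into $Y_i$, two distinct lifts of the same vertex would be joined by a path in $P$ projecting to an essential closed path that is a single piece, violating $C(2)$. That part of your argument, including the identification of $\widetilde Y_i\to Y_i$ with the universal cover and the observation that subpaths of contiguous pieces are pieces, is correct.

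The gap is in the last step, where you transfer the diameter bound on abstract pieces to a length bound on arbitrary pieces. Two things go wrong. First, ``being a side of the product $E$, the path $\sigma$ is non-backtracking'' is unjustified: the sides of the grid are embedded in the abstract diagram $E$, but the map $E\to X$ may fold adjacent squares, so the image of a side can backtrack; nothing in the definition of parallelism forbids this. Second, and more seriously, even a genuinely non-backtracking combinatorial path in a finite complex need not have length bounded by the diameter of that complex --- it may wind around arbitrarily many times --- so ``non-backtracking, hence $|\sigma|\leq\diam$'' is a non sequitur. Indeed, since a contiguous piece is by definition an \emph{arbitrary} combinatorial path in an abstract contiguous piece, the literal combinatorial length of pieces is not bounded; the quantity that your embedding argument does control, and the one actually needed in Proposition~\ref{prop: qi}, is the distance between the endpoints of a piece (equivalently, its diameter, as in the definition of $C'(\alpha)$). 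To repair the step: the hyperplanes crossed by $\rho$ coincide with those crossed by $\sigma$, each such hyperplane is dual to an edge of $P$, and since $P$ embeds in the compact $Y_i$ there are at most $|Y_i^{(1)}|$ of them; hence the endpoints of $\rho$ are separated by a uniformly bounded number of hyperplanes, giving the required bound on $\dist(\rho(0),\rho(\mathrm{end}))$.
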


\subsection{Greendlinger's Lemma}
A cone-cell $C$ in a disc diagram $D$ is a \emph{boundary cone-cell} if $C$ intersect the boundary $\partial D$ along at least one edge. A non-disconnecting boundary cone-cell $C$ is a \emph{shell of degree $k$} if  $\partial C = RQ$ where 
$Q$ is the maximal subpath of $\partial C$ contained in $\partial D$, and $k$ is the minimal number such that $R$ can be expressed as a concatenation of $k$ pieces. We refer to $R$ as the \emph{innerpath} of $C$ and $Q$ as the \emph{outerpath} of $C$.

A \emph{corner} in a disc diagram $D$ is a vertex $v$ in $\partial D$ of valence $2$ in $D$ that is contained in some square of $D$. 
A \emph{cornsquare} is a square $c$ and  a pair of dual curves emanating from consecutive edges $a, b$ of $c$ that terminate on consecutive edges $a',b'$ of $\partial D$. We abuse the notation and refer to the common vertex of $a',b'$ as a cornsquare as well.
A \emph{spur} is a vertex in $\partial D$ of valence $1$ in $D$. If $D$ contains a spur or a cut-vertex, then $D$ is \emph{singular}.

\begin{defn}[Ladder]\label{defn:ladder}
A \emph{pseudo-grid} between paths $\mu$ and $\nu$ is a square disc diagram $E$ where the boundary path $\partial E$ is a concatenation $\mu \rho \overline{\nu} \overline{\eta}$ such that 
\begin{enumerate}
\item each dual curve starting on $\mu$ ends on $\nu$, and vice versa, 
\item no pair of dual curves starting on $\mu$ cross each other,
\item no pair of dual curves cross each other twice.
\end{enumerate}
If a pseudo-grid $E$ is degenerate then either $\mu = \nu$ or $\rho= \eta$.

A \emph{ladder} is a 
disc diagram $(D, \partial D)\to (X^*, X^{(0)})$ which is an alternating union of cone-cells and/or vertices $C_0, C_2\dots, C_{2n}$ and (possibly degenerate) pseudo-grids $E_1, E_3\dots, E_{2n-1}$, with $n\geq 0$, in the following sense:
\begin{enumerate}
\item the boundary path $\partial D$ is a concatenation $\lambda_1\overline{
\lambda_2}$ where the initial points of $\lambda_1, \lambda_2$ lie in $C_0$, and the terminal points of $\lambda_1,\lambda_2$ lie in $C_{2n}$,
\item $\lambda_1 =\alpha_0 \rho_1\alpha_2\cdots \alpha_{2n-2}\rho_{2n-1}\alpha_{2n}$ and $\lambda_2 =\beta_0\eta_1\beta_2\cdots \beta_{2n-2}\eta_{2n-1}\beta_{2n}$,
\item the boundary path $\partial C_{i} = \nu_{i-1}\alpha_i \overline{\mu_{i+1}}\overline{\beta_i}$ for some $\nu_{i-1}$ and $\mu_{i+1}$ (where $\nu_{-1}$ and $\mu_{2n+1}$ are trivial), and
\item the boundary path $\partial E_{i} = \mu_i \rho_i \overline{\nu_i} \overline{\eta_i}$.
\end{enumerate}
See Figure~\ref{fig: ladder}.
\begin{figure}\begin{tikzpicture}[scale = 0.5]
\node[draw=none,fill=none] (x) at (-1, 0) {$C_0$};
\node[draw=none,fill=none] (x) at (1, 0) {$C_2$};
\node[draw=none,fill=none] (x) at (4.5, 0) {$C_4$};
\node[draw=none,fill=none] (x) at (8, 0) {$C_6$};
\node[draw=none,fill=none] (x) at (10, 0) {$C_8$};
\node[draw=none,fill=none] (x) at (13, 0) {$C_{10}$};
\end{tikzpicture}

\includegraphics[scale=0.35]{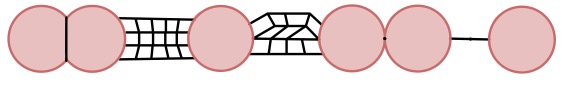}

\begin{tikzpicture}[scale = 0.5]
\node[draw=none,fill=none] (x) at (-0.5, 0) {$E_1$};
\node[draw=none,fill=none] (x) at (2, 0) {$E_3$};
\node[draw=none,fill=none] (x) at (5.5, 0) {$E_5$};
\node[draw=none,fill=none] (x) at (8.5, 0) {$E_7$};
\node[draw=none,fill=none] (x) at (11, 0) {$E_9$};
\node[draw=none,fill=none] (x) at (12, 0) {};
\end{tikzpicture}

\caption{Example of a ladder.}\label{fig: ladder}
\end{figure}
\end{defn}

\begin{lem}[Cubical Greendlinger's Lemma \cite{WiseIsraelHierarchy, Jankiewicz17}]\label{lem:Greendlinger}
Let $X^*= \langle X\mid Y_1, \dots, Y_s\rangle$ be a cubical presentation satisfying the $C(9)$ condition, and let $D\to X^*$ be a weakly reduced disc diagram. Then one of the following holds:
\begin{itemize}
\item $D$ is a ladder, or
\item $D$ has at least three shells of degree $\leq 4$ and/or corners and/or spurs.
\end{itemize}
\end{lem}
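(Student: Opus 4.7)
The plan is to prove the Cubical Greendlinger's Lemma by a combinatorial Gauss--Bonnet argument applied to a suitable rectification of $D$. I would first construct a planar $2$-complex $\widehat D$ whose $2$-cells are the cone-cells of $D$ together with the carriers of the dual curves in $D_\square$, and whose $1$-cells record the \emph{pieces} along which two of these $2$-cells meet. The boundary $\partial\widehat D$ inherits a subdivision from $\partial D$ whose vertices record transitions between cone-cell boundary arcs, dual-curve endpoints, and cornsquares. Rectifying in this way lets pieces, rather than individual cubes, serve as the basic combinatorial unit.

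Next I would assign angles at each corner of $\widehat D$ so that interior vertex angle sums are $2\pi$, dual-curve carriers receive zero defect, and a cone-cell face with $k$ piece-sides receives at most the defect of a regular $k$-gon. Combinatorial Gauss--Bonnet then gives
\[
\sum_{v \in \partial \widehat D}\kappa(v) + \sum_{f} \kappa(f) = 2\pi.
\]
The $C(9)$ hypothesis forces every interior cone-cell to have at least $9$ piece-sides, hence non-positive curvature in the chosen scheme; dual-curve carriers are tautologically flat. The weakly-reduced hypothesis rules out the moves \eqref{item:red1}--\eqref{item:red5} of Definition~\ref{defn:reduction moves}, which is exactly what is needed to prevent positive curvature from leaking into the interior: no adjacent cone-cells merge, no two dual curves cross twice, no cornsquare hides in the interior, and no cone-cell absorbs an adjacent square. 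Consequently the $2\pi$ of positive curvature must accumulate on $\partial \widehat D$, supported only at cone-cells whose innerpath consists of at most $4$ pieces (i.e.\ shells of degree $\le 4$), at spurs, and at corners.

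The dichotomy is then a counting step. If $\widehat D$ carries three or more such positive-curvature features, we are in the second alternative. Otherwise there are at most two of them, so all interior curvature vanishes and the positive curvature lives at exactly two boundary points of $\widehat D$. A standard planarity argument then forces $\widehat D$ to be a linear alternation of cone-cells and pseudo-grids with the two features at its two ends -- precisely the ladder structure of Definition~\ref{defn:ladder}. The pseudo-grid property of each square subdiagram between consecutive cone-cells uses that dual curves in a weakly-reduced diagram do not cross twice (ruled out by move~\eqref{item:red1}) and that no dual curve starts and ends on the same cone-cell arc (ruled out by moves~\eqref{item:red4} and~\eqref{item:red5}).

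I expect the main obstacle to be the curvature bookkeeping itself. Pieces need not be contiguous (cf.\ Figure~\ref{fig: pieces}), so one has to be careful in identifying the correct number of piece-sides on the boundary of each cone-cell in $\widehat D$, especially where non-contiguous pieces must be refined using the intervening hyperplane carriers. One also has to verify that cornsquares adjacent to cone-cells are paid for precisely by the ``corner'' count in the second alternative, rather than being double-counted as shells. The threshold $9$ in $C(9)$ is tight for this balance: it is the smallest integer that, combined with the weakly-reduced assumption, forces interior cone-cell curvature to be non-positive and pins the shell-degree cutoff at $4$.
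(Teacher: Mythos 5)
The paper does not actually prove this lemma: it is imported from \cite{WiseIsraelHierarchy, Jankiewicz17}, and the only original content in the paper is the remark immediately following the statement, namely that (i) the definition of ladder used here is slightly broader (a single cone-cell or a single vertex counts as a ladder), and (ii) the cited results are stated for reduced or minimal-complexity diagrams, but their proofs go through verbatim for weakly reduced diagrams since they only ever invoke the impossibility of moves \eqref{item:red1}--\eqref{item:red5}. Your sketch does correctly reconstruct the strategy of the cited proofs -- rectify the diagram so that pieces become the combinatorial unit, assign angles, and apply combinatorial Gauss--Bonnet -- and you correctly identify that the weakly-reduced hypothesis is what kills interior positive curvature. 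So in spirit you are reproducing the argument of the references rather than the (nonexistent) argument of this paper.

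As a standalone proof, however, there is a genuine gap in your final paragraph. The step ``at most two positive-curvature boundary features, hence $D$ is a linear alternation of cone-cells and pseudo-grids'' is not a standard planarity argument; it is the core of the Ladder Theorem in \cite{WiseIsraelHierarchy} and occupies a substantial induction on complexity (cutting along the innerpath of a boundary cone-cell or along a maximal dual-curve carrier, verifying that each resulting subdiagram again has exactly two positively curved features at prescribed positions, and reassembling). Gauss--Bonnet alone does not rule out, say, a tree-like or branched arrangement of cone-cells whose curvature happens to concentrate at only two boundary points, and establishing conditions (1)--(3) of the pseudo-grid definition for the square regions between consecutive cone-cells requires more than the non-crossing of dual curves. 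Relatedly, the curvature bookkeeping you defer (the exact angle scheme making $C(9)$ and degree $\le 4$ the right thresholds) is where Jankiewicz's improvement over Wise's original $C(12)$ lives, so it cannot be waved through. Since the paper's intent is to quote the result, the cleaner write-up is the one the paper gives: cite the Ladder Theorem and verify that its proof uses only moves \eqref{item:red1}--\eqref{item:red5} and is compatible with the broadened notion of ladder.
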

We note that our definition of ladder differs slightly from the definitions in  \cite{WiseIsraelHierarchy, Jankiewicz17}, so that a single cone-cell and a single vertex count as ladders here.
Also, the statements in \cite{WiseIsraelHierarchy, Jankiewicz17} assume that the disc diagrams are reduced/minimal complexity, but the proofs work for weakly reduced disc diagrams.

\section{Hyperbolic background}
We explain the convention we will follow. 
A pair $(Y, \dist)$ is a \emph{metric graph}, if there exists a graph $\Gamma$ such that $Y$ is the vertex set of $\Gamma$, and $\dist$ is defined as follows. For each edge of $\Gamma$, we assign a positive number which is the \emph{length} of that edge.
The \emph{length} of a simple path in $\Gamma$ is the sum of the lengths of the edges in the path.
A metric $\dist$ on a set $Y$ is a \emph{graph metric}, if $(Y, \dist)$ is a metric graph. 

In this paper, all edges of metric graphs have one of two lengths: $1$ or $\frac 12$.

 \subsection{Thin bigon criterion for hyperbolicity}
A \emph{bigon} in a geodesic metric space $Y$ is a pair of geodesic segments $\gamma_1, \gamma_2$ in $Y$ with the same endpoints, i.e.\ such that $\gamma_1(0) = \gamma_2(0)$ and $\gamma_1(\ell)= \gamma_2(\ell)$ where $\ell$ is the length of $\gamma$. 
A bigon $\gamma_1, \gamma_2$ is \emph{$\varepsilon$-thin} if $d(\gamma_1(t), \gamma_2(t))<\epsilon$ for all $t\in (0,\ell)$.
If we do not care about the specific value of $\varepsilon$, the above condition is equivalent to the condition that $\im\gamma_1\subseteq N_{\varepsilon'}(\im\gamma_2)$ and $\im\gamma_2\subseteq N_{\varepsilon'}(\im\gamma_1)$ for some $\varepsilon'>0$. 
Indeed, suppose that for every $t\in(0,\ell)$ there exists $t'\in (0,\ell)$ such that $d(\gamma_1(t), \gamma_2(t'))<\varepsilon'$. Then $|t-t'|< \varepsilon$, as otherwise $\gamma_1$ and $\gamma_2$ are not geodesic segments. That implies that $d(\gamma_1(t), \gamma_2(t))\leq d(\gamma_1(t), \gamma_2(t'))+d(\gamma_2(t'),\gamma_2(t)) < 2\varepsilon'$.

This generalizes to paths $\gamma_1, \gamma_2$ whose endpoints are not necessarily the same. We say  $\gamma_1, \gamma_2$ \emph{$\epsilon$-fellow travel} if $d(\gamma_1(t), \gamma_2(t))<\epsilon$ for all $t$.

The following is a hyperbolicity criterion for graphs, due to Papasoglu \cite[Thm 1.4]{Papasoglu95} (see also \cite[Prop 4.6]{WiseIsraelHierarchy}).

\begin{prop}[Thin Bigon Criterion]\label{prop:thin bigon criterion}
Let $Y$ be a graph where all bigons are $\varepsilon$-thin for some $\varepsilon > 0$. Then there exists $\delta = \delta(\varepsilon)$ such that $Y$ is $\delta$-hyperbolic.
\end{prop}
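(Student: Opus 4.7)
The plan is to verify the standard Rips characterization of hyperbolicity: geodesic triangles in $Y$ are $\delta$-slim for some $\delta = \delta(\varepsilon)$. The $\varepsilon$-thin bigon hypothesis already asserts slimness for ``degenerate'' triangles in which two sides coincide, so the task is to promote two-sided control to three-sided configurations. A preliminary observation is that $\varepsilon$-thin bigons immediately imply that any two geodesics between the same pair of points $\varepsilon$-fellow-travel, and more generally that any two paths of comparable lengths with the same endpoints, both of which are geodesics, stay within $\varepsilon$.

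Fix a geodesic triangle with vertices $x, y, z$ and sides $\alpha$ from $x$ to $y$, $\beta$ from $y$ to $z$, and $\gamma$ from $x$ to $z$. I would consider the one-parameter family of geodesics $\{\sigma_t\}_{t=0,1,\ldots,|\beta|}$, where $\sigma_t$ is a chosen geodesic from $x$ to $\beta(t)$; thus $\sigma_0 = \alpha$, while $\sigma_{|\beta|}$ is a geodesic from $x$ to $z$ that $\varepsilon$-fellow-travels $\gamma$ by the bigon hypothesis. The goal is to bound how much $\sigma_t$ and $\sigma_{t+1}$ can differ, uniformly in $t$, so that each point of $\alpha = \sigma_0$ may be pushed through the family until it lies close to $\gamma$ or falls into a bounded neighborhood of some $\beta(t)$, establishing slimness.

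The local step compares $\sigma_t$ and $\sigma_{t+1}$ via the ``near-bigon'' whose sides are $\sigma_{t+1}$ and the concatenation $\sigma_t \cdot \beta|_{[t,t+1]}$; the two sides have lengths differing by at most $2$. Excising a bounded collar near the common endpoint $\beta(t+1)$, the truncated paths meet the hypothesis of a genuine bigon after minor adjustment, so outside this collar $\sigma_t$ and $\sigma_{t+1}$ remain within $O(\varepsilon)$ of one another. Chaining this estimate, the trajectory of a point on $\alpha$ under the formal flow $t \mapsto \sigma_t$ either tracks the corresponding point on $\sigma_{|\beta|}$ (hence ends within $O(\varepsilon)$ of $\gamma$), or drifts into the collar around some $\beta(t)$ (hence lies within $O(\varepsilon)$ of $\beta$). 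Either outcome yields the desired triangle slimness.

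The main obstacle is preventing the accumulation of errors across the $|\beta|$ many steps: a naive iteration of the local step produces a bound that grows with $|\beta|$. Papasoglu's resolution is an argument by contradiction using a divergence function. If triangles were not uniformly slim, one could concatenate mismatched members of the family $\{\sigma_t\}$ to manufacture arbitrarily long bigons whose thinness constant strictly exceeds $\varepsilon$, contradicting the hypothesis. Making this divergence quantitative — showing that any slimness failure at a given scale produces an $\varepsilon'$-thick bigon with $\varepsilon' > \varepsilon$ at a controlled larger scale — is the technical heart of the argument, and in a graph with uniform edge-lengths this bootstrap terminates and yields an explicit $\delta(\varepsilon)$.
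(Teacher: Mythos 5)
The paper does not actually prove this proposition: it is imported as a known result of Papasoglu \cite[Thm 1.4]{Papasoglu95} (see also \cite[Prop 4.6]{WiseIsraelHierarchy}), so the only comparison to make is whether your sketch stands on its own as a proof. It does not, for two reasons. First, even the local step is not licensed by the hypothesis as stated: the $\varepsilon$-thin bigon condition applies only to pairs of \emph{geodesics with the same endpoints}. The ``near-bigon'' formed by $\sigma_{t+1}$ and $\sigma_t\cdot\beta|_{[t,t+1]}$ is a genuine geodesic bigon only when $d(x,\beta(t+1))=d(x,\beta(t))+1$; when the distance stays equal or decreases, ``excising a collar near the common endpoint and adjusting'' does not produce two geodesics with common endpoints, so already the claim that $\sigma_t$ and $\sigma_{t+1}$ stay $O(\varepsilon)$-close requires an argument you have not given. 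Second, and more fundamentally, you correctly identify that chaining the local estimate accumulates an error of order $|\beta|$, and at exactly that point you write that ``Papasoglu's resolution is an argument by contradiction using a divergence function'' and that making it quantitative ``is the technical heart of the argument.'' That deferred bootstrap \emph{is} the theorem: everything before it is the naive interpolation that fails, and everything after it is a citation. So the proposal is a plan plus an appeal to Papasoglu, not a proof.

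To see why the missing step cannot be routine, note that the implication is false for geodesic metric spaces in general: in the Euclidean plane any two points are joined by a unique geodesic, so every bigon is $0$-thin, yet triangles are not uniformly slim and the space is not hyperbolic. Hence any correct argument must exploit the graph structure (uniformly bounded edge lengths, hence the abundance of genuinely distinct geodesics between nearby points) in an essential, quantitative way; a remark that ``in a graph the bootstrap terminates'' does not supply this. If you intend to use the result as the paper does --- as a black box with a reference --- that is perfectly legitimate; but as a self-contained proof the proposal has a genuine gap at its central step.
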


Of course, the converse also holds.

\section{The piece metric}
Let $X^* = \langle X\mid Y_1, \dots, Y_s\rangle$ be a cubical presentation. As explained in Section~\ref{sec:cubical presentations}, we write $X^*$ to denote the complex $X$ with cones over $Y_i$'s attached. In particular, $X$ can be viewed as a subspace of $X^*$. 
The preimage of $X$ in the universal cover $\widetilde{X^*}$ of $X^*$ is denoted by $\widehat X$. Note that $\widehat X$ is a covering space of $X$.
The preimage of the $0$-skeleton of $X$ in $\widetilde{X^*}$ is also the $0$-skeleton of $\widehat X$, so it is denoted by $\widehat X^{(0)}$.

\begin{defn}\label{defn:piece metric}
The \emph{piece length} of a combinatorial path $\gamma$ in $\widehat X^{(0)}$ is the smallest $n$ such that $\gamma = \nu_1\cdots \nu_n$ where each $\nu_k$ is a $1$-cube or a piece.
The \emph{piece metric} $\dist_\p$ on $\widehat X^{(0)}$  is defined as $\dist_\p(a,b) = n$ where $n$ is the smallest piece length of a path from $a$ to $b$.
\end{defn}
We note that $\dist_\p$ is a graph metric when $\widehat X^{(0)}$ is viewed as the graph with all edges of length $1$ obtained from the $1$-skeleton $\widehat{X}^{(1)}$ of $\widehat X$ by adding extra edges between vertices contained in a single piece. We will denote this graph by $(\widehat X^{(0)}, \dist_\p)$.

A \emph{piece decomposition} of a path $\gamma$ is an expression $\gamma= \nu_1 \cdots \nu_k$, where each $\nu_i$ is a piece or $1$-cube.
We make the following easy observation:

\begin{lem}\label{lem: splitting paths}
Let $\gamma, \gamma_1, \gamma_2$ be piece-metric geodesics in $\widehat X^{(0)}$ where $\gamma = \gamma_1\gamma_2$. Then 
$$|\gamma_1|_\p+ |\gamma_2|_\p -1 \leq |\gamma|_\p \leq |\gamma_1|_\p+ |\gamma_2|_\p.$$
\end{lem}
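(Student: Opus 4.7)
The plan is to establish the two inequalities separately. The upper bound $|\gamma|_\p \leq |\gamma_1|_\p + |\gamma_2|_\p$ is immediate from the definition of piece length: concatenating optimal piece decompositions $\gamma_1 = \mu_1 \cdots \mu_{|\gamma_1|_\p}$ and $\gamma_2 = \mu'_1 \cdots \mu'_{|\gamma_2|_\p}$ produces a piece decomposition of $\gamma = \gamma_1\gamma_2$ of length $|\gamma_1|_\p + |\gamma_2|_\p$, and $|\gamma|_\p$ is the infimum of such lengths.

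The lower bound rests on one structural observation which I would state and verify first: \emph{any subpath of a piece is again a piece (allowing the degenerate case that the subpath is a single $1$-cube or trivial).} For a contiguous cone-piece this is clear, since a contiguous cone-piece is a combinatorial path landing in a fixed component $P$ of $\widetilde Y_i \cap \widetilde Y_j$, and any subpath also lands in $P$; the wall-piece case is identical. For a general piece $\rho$, which is parallel to a contiguous piece $\sigma$ via a grid $E$ with matching dual curves, restricting $E$ to any subinterval of $\rho$ exhibits the corresponding subpath of $\sigma$ as a parallel contiguous piece, so the subpath of $\rho$ is again a piece.

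Granting this, fix an optimal piece decomposition $\gamma = \nu_1 \cdots \nu_k$ with $k = |\gamma|_\p$, and let $v$ denote the vertex where $\gamma_1$ terminates and $\gamma_2$ begins. If $v$ coincides with one of the breakpoints between consecutive $\nu_j$'s (or with an endpoint of $\gamma$), then $\gamma_1$ and $\gamma_2$ inherit piece decompositions of total length $k$, giving $|\gamma_1|_\p + |\gamma_2|_\p \leq k = |\gamma|_\p$. Otherwise $v$ lies in the interior of some $\nu_j$, and the subpath observation lets us split $\nu_j = \nu_j'\nu_j''$ into two pieces; then $\gamma_1$ admits a piece decomposition of length $j$ and $\gamma_2$ one of length $k - j + 1$, so $|\gamma_1|_\p + |\gamma_2|_\p \leq k + 1 = |\gamma|_\p + 1$, which rearranges to the stated bound.

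The main (and essentially only) obstacle is the subpath-of-a-piece observation; once established, the rest is bookkeeping, and the additive $-1$ in the bound is exactly accounted for by the possibility that the concatenation point $v$ cuts a single piece of the optimal decomposition of $\gamma$ into two.
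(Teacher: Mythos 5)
Your proof is correct and follows essentially the same route as the paper: concatenation gives the upper bound, and the lower bound comes from noting that the concatenation point splits at most one piece of an optimal decomposition of $\gamma$ into two. The only difference is that you explicitly isolate and verify the observation that a subpath of a piece is again a piece, which the paper's proof uses implicitly.
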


\begin{proof}
Any piece decomposition $\gamma= \nu_1 \cdots \nu_k$ yields piece decompositions of both $\gamma_1$ and $\gamma_2$, where at most one piece $\nu_i$ for $i \in \{1, \ldots, k\}$ further decomposes into the concatenation of $2$ pieces $\nu'_i, \nu''_i$, so $\gamma_1=\nu_1 \cdots \nu'_i$ and $\gamma_2=\nu''_i \cdots \nu_k$. Similarly, any two piece decompositions of $\gamma_1$ and $\gamma_2$ can be concatenated to obtain a piece decomposition of $\gamma$.
\end{proof}

We now prove a few basic facts about the piece metric. First, it is quasi-isometric to the combinatorial metric under fairly weak hypotheses.

\begin{prop}\label{prop: qi}
Let  $X^*  =\langle X \mid Y_1,\dots, Y_s\rangle$ be a cubical presentation satisfying the $C(p)$ condition for $p\geq 2$, and where $X, Y_1, \dots, Y_s$ are compact.
Then $(\widehat X^{(0)}, \dist_\p)$ is quasi-isometric to $(\widehat X^{(0)}, \dist)$ where $\dist$ is the standard combinatorial metric. Moreover, there is a uniform bound on the $\dist_\p$-diameters of cones.
\end{prop}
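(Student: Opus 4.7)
The plan is to show that the identity map on $\widehat X^{(0)}$ is a bi-Lipschitz equivalence between $\dist_\p$ and $\dist$, which immediately yields the desired quasi-isometry. Concretely, I would establish the two inequalities $\dist_\p \leq \dist$ and $\dist \leq L \cdot \dist_\p$ for a uniform constant $L$.

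The first inequality is essentially by construction. By Definition~\ref{defn:piece metric}, a single $1$-cube counts as a trivial piece (of piece-length $1$), so the graph realising $(\widehat X^{(0)}, \dist_\p)$ contains $\widehat X^{(1)}$ as a subgraph with the same edge lengths. Hence any combinatorial path of length $n$ from $a$ to $b$ is a piece decomposition of piece-length at most $n$, giving $\dist_\p(a,b) \leq \dist(a,b)$.

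For the reverse inequality, I would invoke Lemma~\ref{lem:bound on size of pieces}: since $X^*$ satisfies $C(p)$ for $p\geq 2$ and $X, Y_1,\ldots,Y_s$ are compact, there is a uniform constant $L$ bounding the combinatorial length of every piece of $X^*$. Given any piece-geodesic $\gamma = \nu_1\cdots\nu_k$ realising $\dist_\p(a,b)=k$, each $\nu_i$ is either a $1$-cube of combinatorial length $1$ or a piece of combinatorial length at most $L$. Concatenating combinatorial representatives produces a combinatorial path from $a$ to $b$ of total length at most $Lk$, so $\dist(a,b)\leq L\cdot\dist_\p(a,b)$. Together with the previous inequality, this establishes bi-Lipschitzness and hence the quasi-isometry, since the identity map is surjective (so no coboundedness issue arises).

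For the uniform bound on $\dist_\p$-diameters of cones, I would argue as follows. Each cone in $\widetilde{X^*}$ arises from an attaching map $Y_i \to \widetilde{X^*}$ whose image lies in $\widehat X$, and is the lift of the local isometry $Y_i\looparrowright X$. Since $Y_i$ is compact, it has finite combinatorial diameter $D_i$, and because the local isometry is $1$-Lipschitz on $0$-skeleta, any two vertices of the image of $Y_i$ in $\widehat X$ are joined by a combinatorial path of length at most $D_i$. Setting $D := \max_i D_i$ and applying $\dist_\p \leq \dist$ yields that every cone has $\dist_\p$-diameter at most $D$.

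There is no significant obstacle here once Lemma~\ref{lem:bound on size of pieces} is in hand; that bound $L$ on piece lengths is the true crux, and everything else is formal. The only subtlety worth flagging is the distinction between pieces and contiguous pieces: since by definition a piece is parallel to a contiguous piece and parallelism preserves combinatorial length (via the grid in the definition of parallelism), the combinatorial-length bound from Lemma~\ref{lem:bound on size of pieces} applies to all pieces, not merely contiguous ones.
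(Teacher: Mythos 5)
Your proposal is correct and follows essentially the same route as the paper: $\dist_\p\leq\dist$ is immediate from the definition, the reverse Lipschitz bound comes from the uniform bound on piece lengths in Lemma~\ref{lem:bound on size of pieces}, and the cone-diameter bound follows from compactness of the finitely many $Y_i$ together with $\dist_\p\leq\dist$. The only difference is cosmetic: you spell out the bi-Lipschitz constants and the $1$-Lipschitz property of the lifted local isometries slightly more explicitly than the paper does.
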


\begin{proof}
Indeed, $\dist_\p(a,b)\leq \dist(a,b)$ for all $a,b \in \widehat X^{(0)}$, and by Lemma~\ref{lem:bound on size of pieces} there is an upper bound $M$ on the combinatorial length of pieces, so we also have that $\dist(a,b)\leq M \dist_\p(a,b)$.

Since there are only finitely many $Y_i$'s and each $Y_i$ is compact, there must be an upper bound on the diameter of a simple essential curve in $Y_i$ with respect to $\dist$ and thus with respect to $\dist_\p$, which implies the second statement.
\end{proof}

\begin{cor}\label{cor:closeness of geodesics and quasigeodesics}
 Suppose that $\pi_1 X$ is hyperbolic. Let $D\to X^*$ be a square diagram with boundary $\partial D = \gamma \overline\lambda$ where $\gamma$ is a $\dist_\p$-geodesic, and $\lambda$ is a $\dist$-geodesic. Then the bigon $D$ is $M$-thin for a uniform constant $M$.
\end{cor}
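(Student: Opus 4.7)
The plan is to lift $D$ to the universal cover $\widetilde X$ of $X$, where hyperbolicity lets me invoke the standard stability of quasi-geodesics (Morse lemma). Because $D$ is a square diagram it contains no cone-cells, so the map $D\to X^*$ factors through the subcomplex $\widehat X\subset \widetilde{X^*}$; by simple connectedness, this lifts to a square disc diagram $\widetilde D\to \widetilde X$ with boundary lifts $\widetilde\gamma,\widetilde\lambda$ sharing endpoints $\widetilde a,\widetilde b$. The factorization $\widetilde X\to \widehat X\to X$ into cube-complex covers consists of local isometries, so each cover preserves combinatorial path-length; in particular $\dist_{\widehat X}(a',b')\le \dist_{\widetilde X}(\widetilde a',\widetilde b')$ for every pair of vertices and lifts.

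First, I will check that $\widetilde\lambda$ is a $\dist$-geodesic in $\widetilde X$. The equalities $|\widetilde\lambda|=|\lambda|=\dist_{\widehat X}(a,b)$, combined with the length-preservation inequality above, force $|\widetilde\lambda|=\dist_{\widetilde X}(\widetilde a,\widetilde b)$.

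Next, I will show that $\widetilde\gamma$ is a uniform $(M,M)$-quasi-geodesic in $\widetilde X$, where $M$ is the bound on the combinatorial length of pieces provided by Lemma~\ref{lem:bound on size of pieces}. For any subpath $\gamma'$ of $\gamma$ with endpoints $a',b'$, a short bookkeeping argument based on Lemma~\ref{lem: splitting paths} (together with the fact that subpaths of pieces are themselves pieces) gives $|\gamma'|_{\p}\le \dist_{\p}(a',b')+1$. Chaining the piece-length bound $|\gamma'|\le M|\gamma'|_{\p}$, the inequality $\dist_{\p}\le \dist$, and length-preservation under the cover yields
\[
|\widetilde\gamma'|\le M\,\dist_{\widetilde X}(\widetilde a',\widetilde b')+M,
\]
as required.

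Finally, since $X$ is compact and $\pi_1 X$ is hyperbolic, $\widetilde X$ is $\delta$-hyperbolic for some $\delta$ depending only on $X$. The Morse lemma then furnishes a constant $K=K(M,\delta)$ such that $\widetilde\gamma$ and $\widetilde\lambda$ lie within Hausdorff $\dist_{\widetilde X}$-distance $K$, and this bound descends to $\widehat X$ via the length-preserving cover. I anticipate that the main technical hurdle will be the subpath estimate $|\gamma'|_{\p}\le \dist_{\p}(a',b')+1$; once that is in hand, the conclusion is a standard application of quasi-geodesic stability in a hyperbolic space.
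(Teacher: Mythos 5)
Your proposal is correct and follows essentially the same route as the paper's proof: lift the square diagram to $\widetilde X$, observe that a $\dist_\p$-geodesic is a uniform $\dist$-quasi-geodesic (the paper cites Proposition~\ref{prop: qi} for this; you re-derive the needed subpath estimate from Lemmas~\ref{lem:bound on size of pieces} and~\ref{lem: splitting paths}), and apply stability of quasi-geodesics in the $\delta$-hyperbolic space $\widetilde X$. The only quibble is that the additive constant in your subpath estimate should be $2$ rather than $1$ (a subpath may cut two pieces of a minimal decomposition), which is immaterial to the conclusion.
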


\begin{proof}
First note that $D\to X^*$ is a square diagram in $\widehat X$, but it also lifts to $\widetilde X$. The metric $\dist_\p$ also lifts to $\widetilde X^{(0)}$, and by Proposition~\ref{prop: qi} $\dist, \dist_\p$ are quasi-isometric on $\widehat X^{(0)}$, and therefore on $\widetilde X^{(0)}$. The statement then follows from the uniform bound on the Hausdorff distance between geodesics and quasi-geodesics in hyperbolic spaces.
\end{proof}

We note that ladders are thin with respect to the piece metric.

\begin{prop}\label{prop: thin ladder} Suppose that $\pi_1 X$ is hyperbolic.
Let $D\to X^*$ be a ladder with boundary $\partial D = \lambda_1\overline \lambda_2$ as in Definition~\ref{defn:ladder} where each subpath of $\lambda_i$ contained in a single pseudo-grid is a geodesic. 
Then the bigon $\lambda_1, \lambda_2$ is $\epsilon$-thin with respect to $\dist_\p$ for a uniform constant $\epsilon>0$ dependent only on $X^*$.
\end{prop}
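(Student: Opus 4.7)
The plan is to prove $\epsilon$-thinness by analyzing the two types of constituent subdiagrams of the ladder (cone-cells and pseudo-grids) independently, and showing that each contributes only a uniformly bounded amount to the $\dist_\p$-distance between $\lambda_1$ and $\lambda_2$.

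For the cone-cell portions $\alpha_i\subseteq \partial C_i\subseteq \lambda_1$ and $\beta_i\subseteq \partial C_i\subseteq \lambda_2$, the outer boundary of $C_i$ lies on a lift of some relator $Y_j$, and the second clause of Proposition~\ref{prop: qi} provides a uniform constant $K$, depending only on $X^*$, such that every lift of $Y_j$ has $\dist_\p$-diameter at most $K$. Hence any vertex on $\alpha_i$ is at $\dist_\p$-distance at most $K$ from any vertex on $\beta_i$.

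For the pseudo-grid portions $\rho_i\subseteq \partial E_i\subseteq \lambda_1$ and $\eta_i\subseteq \partial E_i\subseteq \lambda_2$, I would first lift the simply connected square diagram $E_i$ to $\widetilde X$, so that all four boundary subpaths lift consistently. In this lift, the endpoints of $\rho_i$ and $\eta_i$ form two pairs: one joined by $\mu_i\subseteq \partial C_{i-1}$ and the other by $\nu_i\subseteq \partial C_{i+1}$; by the cone-cell bound above, each pair has $\dist_\p$-distance at most $K$. Since $\pi_1 X$ is hyperbolic, $(\widetilde X^{(0)}, \dist)$ is hyperbolic, and Proposition~\ref{prop: qi} (lifted to $\widetilde X$) transfers this to $(\widetilde X^{(0)}, \dist_\p)$, with some hyperbolicity constant $\delta$ depending only on $X^*$. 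The lifted paths $\rho_i$ and $\eta_i$ remain $\dist_\p$-geodesics in $\widetilde X^{(0)}$, since the piece length of a combinatorial path is preserved on passing to a covering space. A standard property of $\delta$-hyperbolic spaces then asserts that two geodesics whose pairs of endpoints are $K$-close uniformly fellow-travel at some distance $\epsilon_0 = \epsilon_0(K,\delta)$.

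Setting $\epsilon = \max(K, \epsilon_0)$ and converting Hausdorff closeness to parametrized fellow-travel as in the discussion preceding Proposition~\ref{prop:thin bigon criterion} yields the required $\epsilon$-thin bigon. The main obstacle is verifying that every cone-cell's outer boundary has uniformly bounded $\dist_\p$-diameter: when the annular part of a cone-cell is trivial, this is immediate from Proposition~\ref{prop: qi}, since the outer boundary is then an essential closed curve on some $Y_j$. For the more general cone-cells allowed in this paper, one must additionally control the thickness of the annular part via the reduction structure, and then relate the outer boundary to the inner one using the quasi-isometry between $\dist$ and $\dist_\p$.
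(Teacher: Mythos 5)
Your proposal follows essentially the same route as the paper's proof: cone-cell portions of the ladder are handled by the uniform bound on $\dist_\p$-diameters of cones from Proposition~\ref{prop: qi}, and pseudo-grid portions by observing that the two bounding subpaths have uniformly $\dist_\p$-close endpoints (they meet the same cone-cells) and then invoking stability of geodesics in the hyperbolic space $\widetilde X$. Two minor points: the hypothesis is intended as \emph{combinatorial} geodesity of the pseudo-grid subpaths, so the fellow-travelling step should go through (quasi-)geodesic stability for $\dist$ and then transfer to $\dist_\p$ via Proposition~\ref{prop: qi} rather than by asserting the subpaths are $\dist_\p$-geodesics; and the caveat you raise about cone-cells with a nontrivial annular part is a genuine subtlety, but one the paper's own proof also elides by citing Proposition~\ref{prop: qi} for the whole cone-cell.
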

\begin{proof}We only show that $\lambda_1\subseteq N_\epsilon(\lambda_2)$, since the argument for $\lambda_2\subseteq N_\epsilon(\lambda_1)$ is analogous. Let $x\in\lambda_1$. We want to show that $\dist_\p(x, \lambda_2)\leq \epsilon$. 
If $x$ belongs to a cone-cell $C$, then by the definition of the ladder, $\lambda_2$ also intersects $C$, so $\dist_\p(x, \lambda_2)$ is bounded by the piece-metric diameter of $C$, which is uniformly bounded by some constant $\epsilon_1$ by Proposition~\ref{prop: qi}.

Otherwise $x$ lies in a pseudo-grid. Let $\rho, \eta$ be subpaths of $\lambda_1, \lambda_2$ respectively, contained in the pseudo-grid which contains $x$. The paths $\rho, \eta$ are both combinatorial geodesics by the assumption. By Proposition~\ref{prop: qi} $\rho, \eta$ start and end at a uniform distance, since they lie in the same cone-cell. By hyperbolicity of $\widetilde X$, there exists $\epsilon_2>0$ such that $\rho, \eta$  $\epsilon_2$-fellow travel. The conclusion follows with $\epsilon= \max\{\epsilon_1, \epsilon_2\}$. 
\end{proof}

In the proof of Theorem~\ref{thm: hyperbolicity of C(p)} we will use the following technical lemma.

\begin{lem}\label{lem: piece length bound}
Let $X^*$ be a cubical presentation, and let $E\to X^*$ be a square diagram, with the induced metric $\dist_\p$. Suppose that $\partial E = \ell Q r\overline \gamma$ where $\gamma$ is a piece geodesic, no dual curve in $E$ crosses $\ell Q r$ twice, and each of $ \ell,  Q, r$ contains no cornsquares of $E$ in its interior.  
Moreover, assume that $|Q|_\p\geq 3$.
Then 
$|\gamma|_\p\geq |\ell|_\p +| Q|_\p + |r|_\p -3$.
\end{lem}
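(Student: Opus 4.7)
The plan is to exhibit three subpaths of $\gamma$ that are parallel (in $\widehat X$) to $\ell$, $Q$, and $r$ respectively, and then combine them via iterated applications of Lemma~\ref{lem: splitting paths}. First I would note that, since no dual curve of $E$ crosses $\ell Q r$ twice, every dual curve meeting $\ell \cup Q \cup r$ must have its other endpoint on $\gamma$. In particular, the dual curves emanating from $\ell$, from $Q$, and from $r$ all terminate on $\gamma$.

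Next I would use the hypothesis that $\ell$, $Q$, $r$ contain no cornsquares in their interiors, together with planarity, to show that the family of dual curves coming from each of these three paths is pairwise non-crossing: two dual curves from consecutive edges in the interior of one side that crossed would produce a cornsquare on that side. Planarity then forces the endpoints of each such family on $\gamma$ to form a contiguous subpath, yielding three disjoint subpaths $\gamma_\ell$, $\gamma_Q$, $\gamma_r$ appearing along $\gamma$ in this order (possibly separated by intermediate subpaths coming from dual curves with both endpoints on $\gamma$). The sub-diagram of $E$ bounded by each of $\ell$, $Q$, $r$ together with the matched subpath of $\gamma$ and at most two extreme dual curves then carries a grid structure whose dual curves match up, witnessing parallelism in $\widehat X$. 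Since piece-length is invariant under parallelism (any piece-decomposition of one side transports, edge by edge, to a piece-decomposition of the matched subpath), one obtains $|\gamma_\ell|_\p = |\ell|_\p$, $|\gamma_Q|_\p = |Q|_\p$, and $|\gamma_r|_\p = |r|_\p$.

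Finally, since every subpath of the piece-geodesic $\gamma$ is itself a piece-geodesic, I would apply Lemma~\ref{lem: splitting paths} iteratively to the decomposition of $\gamma$ arising from $\gamma_\ell$, $\gamma_Q$, $\gamma_r$ and the intermediate subpaths. A careful case analysis using $|Q|_\p\geq 3$ allows one to absorb the intermediate subpaths at the corners $p_1$ and $p_2$ into an adjacent $\gamma_\ell$, $\gamma_Q$, or $\gamma_r$ without trivializing the $\gamma_Q$ contribution, so that $\gamma$ is expressed as a concatenation of at most four piece-geodesics whose piece-lengths sum to at least $|\ell|_\p + |Q|_\p + |r|_\p$; three applications of Lemma~\ref{lem: splitting paths} then produce the claimed bound $|\gamma|_\p \geq |\ell|_\p + |Q|_\p + |r|_\p - 3$. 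The main technical obstacle will be the rigorous verification of the parallelism claim in the second step: namely, that the sub-diagrams bounded by each of $\ell$, $Q$, $r$ and its matched subpath really do have grid structure. This requires a careful planarity argument to rule out unexpected crossings among dual curves from the same side, and relies essentially on the no-cornsquare hypothesis.
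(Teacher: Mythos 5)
There is a genuine gap, and it sits exactly where the real difficulty of Lemma~\ref{lem: piece length bound} lies. Your argument rests on the claim that the endpoints on $\gamma$ of the dual curves emanating from $\ell$, from $Q$, and from $r$ form three \emph{disjoint} subpaths $\gamma_\ell,\gamma_Q,\gamma_r$ occurring in order along $\gamma$, each \emph{parallel} (via a grid) to the corresponding side. This fails in general. The no-cornsquare hypothesis applies only to the \emph{interiors} of $\ell$, $Q$ and $r$ separately; nothing forbids a dual curve emanating from $\ell$ from crossing a dual curve emanating from $Q$ (a potential cornsquare at the junction of $\ell$ and $Q$ is allowed). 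When such a crossing occurs, the endpoints of $\ell$-curves and $Q$-curves interleave on $\gamma$, so the matched regions overlap rather than being disjoint, and the subdiagram "between $\ell$ and $\gamma_\ell$" is not a grid (it is traversed by $Q$-curves, and possibly by dual curves with both ends on $\gamma$, so $\gamma_\ell$ contains edges not matched to $\ell$ at all). Consequently the asserted equality $|\gamma_\ell|_\p=|\ell|_\p$ is unjustified, and the decomposition of $\gamma$ to which you want to apply Lemma~\ref{lem: splitting paths} does not exist. Controlling precisely this overlap is where the constant $3$ comes from, so it cannot be waved away as a "careful case analysis at the corners."

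The paper runs the argument in the opposite direction, which avoids the problem: it starts from a \emph{minimal} piece decomposition $\gamma=\nu_1\cdots\nu_n$ and projects each $\nu_j$ down onto $\ell Qr$ along dual curves. The no-cornsquare hypothesis is used to show (i) each $\nu_j$ projects to at most one piece in each of $\ell$, $Q$, $r$, and (ii) the resulting indexing functions into $\{1,\dots,n\}$ are monotone; this yields $|\ell|_\p\le n_\ell\le i_\ell(n_\ell)$ and $|r|_\p\le n_r\le n-i_r(1)$. The overlap you are missing is then handled by showing that all the pieces of $Q$ projected from $\nu_j$ with $j<i_\ell(n_\ell)$ concatenate into a \emph{single} piece of $Q$ (their dual curves all cross a common dual curve exiting through $\ell$), and symmetrically on the right; together with $|Q|_\p\ge 3$ this gives $|Q|_\p\le i_r(1)-i_\ell(n_\ell)+3$ and hence the bound. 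If you want to keep your bottom-up framing, you would at minimum have to replace "parallel subpaths" by the sets of matched edges, prove only the inequality $|\ell|_\p\le|\gamma_\ell|_\p$ via this projection argument, and then supply the merging argument for the overlapping portions of $Q$ — at which point you have reconstructed the paper's proof.
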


\begin{proof}
See Figure~\ref{fig:piece projections} for a diagram $E$ with $\partial E = \ell Q r\overline \gamma$. 
By the assumptions every dual curve of $E$ starting at $\ell Q r$ must exit the diagram in $\gamma$. Thus each edge of $\ell Q r$ is naturally paired with an edge of $\gamma$. 
For every piece $\nu$ in $\gamma$, we consider all the dual curves $h_1,\dots, h_n$ starting at $\nu$ that exit $E$ in $\ \ell Q r$. 

These define a collection of edges in $\ell Q r$, and every subcollection of such consecutive edges forms a path that is a piece, as it is parallel to some path contained in one of $Y_i$.
By grouping consecutive edges into maximal subpaths contained in one of $\ell$, $ Q$, or $r$, we get pieces $\nu^1, \dots, \nu^k$ whose interiors are pairwise disjoint (ordered consistently with the orientation of $\ell Q r$), and say that $\nu$ \emph{projects} to $\nu^1, \dots, \nu^k$. 

First we claim that each of $\ell,  Q, r$ contains at most one piece $\nu^i$. Suppose to the contrary that $\nu^i, \nu^{i+1}$ are both contained in $\ell$ (and the same argument applies to $ Q, r$). Then each dual curve starting at an edge of $\ell$ lying between $\nu^i$ and $\nu^{i+1}$ must intersect at least one dual curve starting at edges of $\nu^i, \nu^{i+1}$, as otherwise it would also lie in a projection of $\nu$, yielding a cornsquare in $\ell$. 

Thus we can denote the projection of $\nu$ by $\nu^\ell, \nu^{ Q}, \nu^r$ where each piece is a possibly empty projection onto $\ell,  Q, r$ respectively. See left diagram in Figure~\ref{fig:piece projections}. 
\begin{figure}
\includegraphics[scale=0.37]{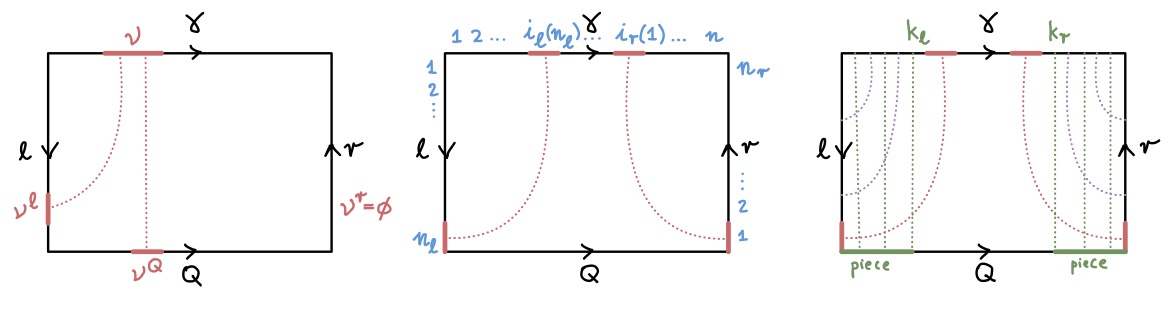}
\caption{Steps of the proof of Lemma~\ref{lem: piece length bound}.}\label{fig:piece projections}
\end{figure}
We will assume that they are oriented consistently with $\ell,  Q, r$ respectively, not necessarily consistently with $\nu$.

Let $\gamma = \nu_1\cdots \nu_n$ be a minimal piece decomposition of $\gamma$, i.e.\ $|\gamma|_\p = n$. 
Let $\ell = \nu^\ell_{i_\ell(1)}\cdots \nu^\ell_{i_\ell(n_{\ell})}$ be the induced piece-decomposition where we only write non-trivial pieces. 
In particular, $i_{\ell}:\{1, \dots, n_{\ell}\}\to \{1, \dots, n\}$ is an injective function. 
We now claim that $i_{\ell}$ is monotone. 
Suppose to the contrary, that $1\leq j<k\leq n_{\ell}$ but $i_{\ell}(j)>i_{\ell}(k)$.
Then there must exists a cornsquare in the connected subpath of $\ell$ containing $\nu^\ell_{i_\ell(k)}$ and $\nu^\ell_{i_\ell(j)}$, which is a contradiction.
Analogously, we get $ Q = \nu^{ Q}_{i_{ Q}(1)}\cdots \nu^{ Q}_{i_{ Q}(n_{ Q})}$ and $r = \nu^r_{i_r(1)}\cdots \nu^r_{i_r(n_{r})}$, and the functions $i_{ Q}, i_{r}$ are monotone. 
These are not necessarily the minimal piece decompositions, but certainly we have $|\ell|_\p \leq n_{\ell}$, $| Q|_\p \leq n_{ Q}$, and $|r|_\p \leq n_{r}$. 
To prove the lemma we will show that  $| Q|_\p + n_{\ell} + n_{r} \leq n+3$.

Note that $i_{\ell}(n_{\ell})$ is the largest index in $\{1, \dots, n\}$ such that $\nu_{i_{\ell}(n_{\ell})}$ has non-trivial projection onto $\ell$, and similarly $i_{r}(1)$ is the lowest index in $\{1, \dots, n\}$ such that $\nu_{i_{r}(1)}$ has non-trivial projection onto $r$. See middle diagram in Figure~\ref{fig:piece projections}. 
 Since $i_{\ell}, i_{r}$ are monotone, $n_{\ell}\leq i_{\ell}(n_{\ell})$ and $n_r\leq n- i_{r}(1)$. Thus, it remains to prove that 
$| Q|_\p\leq i_r(1) - i_{\ell}(n_{\ell}) + 3$.

Let $k_{\ell}$ is the largest number such that $i_{ Q}(k_{\ell})<i_{\ell}(n_{\ell})$. We claim that $\nu_{i_{ Q}(1)}^ Q\cdots \nu_{i_{ Q}(k_{\ell})}^ Q$ is a single piece in $ Q$.
Indeed, the dual curves starting in $\nu_{i_{ Q}(1)}^ Q\dots \nu_{i_{ Q}(k_{\ell})}^ Q$ must all intersect a dual curve starting in $\nu_{i_{\ell}(n_{\ell})}$ and exiting the diagram in $\ell$.
See right diagram in Figure~\ref{fig:piece projections}. 
Similarly, let $k_r$ be the smallest number such that 
$i_{ Q}(k_r)>i_r(1)$ and note that $\nu_{i_{ Q}(k_r)}^ Q\dots \nu_{i_{ Q}(n_ Q)}^ Q$ is a single piece in $ Q$.
 By assumption, $| Q|_p\geq 3$, so the subpath $\nu^{ Q}_{i_{ Q}(k_{\ell}+1)}\cdots \nu^{ Q}_{i_{ Q}(k_r-1)}$ is nonempty. In particular, $k_r> k_{\ell}+1$. 
 Since by definition of $k_\ell, k_r$ we have $i_{ Q}(k_{\ell}+1) \geq i_{\ell}(n_{\ell})$ and $i_{ Q}(k_r-1)\leq i_r(1)$, we conclude that 
 \begin{align*}
 &|\nu^{ Q}_{i_{ Q}(k_{\ell}+1)}\nu^{ Q}_{i_{ Q}(k_{\ell}+2)}\cdots \nu^{ Q}_{i_{ Q}(k_r-1)}|_\p\\ 
\leq\ \  &|\nu_{i_{ Q}(k_{\ell}+1)}\nu_{(i_{ Q}(k_{\ell}+1)+1)} \cdots \nu_{i_{ Q}(k_r-1)}|_\p \\
\leq \ \ &|\nu_{ i_{\ell}(n_{\ell})}\nu_{( i_{\ell}(n_{\ell})+1)}\cdots \nu_{i_r(1)}|_\p\\
\leq \ \ &  i_r(1) - i_{\ell}(n_{\ell})+1.
 \end{align*}
This proves that $| Q|_\p\leq i_r(1) - i_{\ell}(n_{\ell})+3$ and completes the proof.
\end{proof}

\section{Proof of hyperbolicity}
In the proof of the next theorem, we show that, under suitable assumptions, $(\widehat X^{(0)}, \dist_\p)$ is a $\delta$-hyperbolic graph to deduce that $\pi_1 X^*$ is hyperbolic. The basic strategy is similar to \cite[Thm 4.7]{WiseIsraelHierarchy}, but the details in this case are significantly more involved.

\begin{thm}\label{thm: hyperbolicity of C(p)}
Let $X^*= \langle X \mid Y_1,\dots, Y_s\rangle$ be a cubical presentation satisfying the $C(p)$ cubical small-cancellation condition for $p\geq 14$, 
where $X, Y_1, \dots, Y_s$ are compact, and $\pi_1X$ is hyperbolic. 
Then $\pi_1 X^*$ is hyperbolic.
\end{thm}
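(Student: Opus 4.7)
The plan is to show that $(\widehat X^{(0)}, \dist_\p)$ is a $\delta$-hyperbolic graph. Granting this, since $\pi_1 X^*$ acts freely and cocompactly on $\widehat X$ and the piece metric is quasi-isometric to the combinatorial metric on $\widehat X^{(0)}$ by Proposition~\ref{prop: qi}, the Milnor--Svarc lemma yields hyperbolicity of $\pi_1 X^*$. To establish hyperbolicity of $(\widehat X^{(0)}, \dist_\p)$, I would apply Papasoglu's thin bigon criterion (Proposition~\ref{prop:thin bigon criterion}), so it suffices to produce a uniform $\varepsilon > 0$ such that every piece-geodesic bigon $\gamma_1, \gamma_2$ is $\varepsilon$-thin.

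Fix such a bigon and take a reduced disc diagram $D \to X^*$ of minimal complexity with $\partial D = \gamma_1 \overline{\gamma_2}$, provided by Lemma~\ref{lem:reductions}. Since $p \geq 14 \geq 9$, the Cubical Greendlinger's Lemma (Lemma~\ref{lem:Greendlinger}) applies and gives two cases. In the ladder case, I would first modify $\gamma_1, \gamma_2$ within each pseudo-grid of $D$ to be combinatorial geodesics there; this does not increase piece length and so preserves piece-geodesicity of the modified paths, after which Proposition~\ref{prop: thin ladder} yields the required uniform $\varepsilon$-thinness.

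In the non-ladder case, $D$ has at least three boundary features, each being a shell of degree $\leq 4$, a corner, or a spur, and since a bigon has only two cusps, by pigeonhole at least one of these features sits in the interior of $\gamma_1$ or $\gamma_2$, say $\gamma_1$. The goal is to contradict piece-geodesicity of $\gamma_1$ by exhibiting a piece-path with the same endpoints and strictly smaller piece length. A spur is removed outright, strictly shortening $\gamma_1$. A corner is pushed across its containing square: since opposite edges of a square are parallel in $\widetilde{X^*}$ the piece length is preserved, while the underlying diagram has strictly smaller area, so iterating this eliminates all corners while maintaining piece-geodesicity, reducing to the case where only spurs and shells remain among the interior features. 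For a shell $S$ of degree $\leq 4$ with outerpath $Q$ on $\gamma_1$ and innerpath $R$, the path obtained by replacing $Q$ by $R$ in $\gamma_1$ has, by Lemma~\ref{lem: splitting paths} applied twice, piece length at most $|\gamma_1|_\p - |Q|_\p + |R|_\p + 2$; since $|R|_\p \leq 4$, this strictly shortens $\gamma_1$ once $|Q|_\p \geq 7$.

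The central obstacle is establishing the lower bound $|Q|_\p \geq 7$ in the shell case. The $C(14)$ condition forces any pure piece decomposition of $\partial C = RQ$ to use at least $14$ pieces, leaving at least $10$ pieces on $Q$; however, the piece metric $\dist_\p$ allows both pieces and $1$-cubes as atoms in a decomposition, so this cone-level piece count does not immediately descend to $|Q|_\p$. I would bridge this gap via Lemma~\ref{lem: piece length bound}, applied to a square subdiagram of $D$ whose boundary consists of $Q$ together with two short adjacent subpaths of $\gamma_1$ and a piece-geodesic third side through $D_\square$: the lemma converts the cone-level piece count on $\partial C$ into a piece-metric lower bound of the shape $|Q|_\p \geq p - c$ for a bounded correction $c$, and with $p \geq 14$ this comfortably beats the constant $|R|_\p + 2 \leq 6$, delivering the contradiction with piece-geodesicity of $\gamma_1$ and closing the proof.
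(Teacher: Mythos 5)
Your high-level strategy coincides with the paper's (piece metric, thin bigon criterion, Greendlinger, shell replacement), but the non-ladder case as you've written it has a genuine gap, concentrated in your treatment of corners. Pushing a corner across a square replaces consecutive edges $ab$ of $\gamma_1$ by the opposite edges $cd$; these are dual to the same hyperplanes, but a long piece of $\gamma_1$ that \emph{ends} at $a$ is severed by the push, so the piece length of the modified path can strictly increase. Hence the modified path need not remain a $\dist_\p$-geodesic, and your subsequent shell argument (which derives a contradiction from piece-geodesicity) is applied to a path for which that hypothesis is no longer known. The paper avoids this entirely by never modifying $\gamma_1,\gamma_2$: it forms the \emph{sandwich decomposition} $D=D_1\cup D'\cup D_2$ (Constructions~\ref{const:push} and~\ref{const:sandwich}), pushing all absorbable squares into buffer square diagrams $D_1,D_2$ so that Greendlinger's Lemma, applied to the middle diagram $D'$, must produce a shell with outerpath $Q$ on the inner path $\lambda_1$. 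The shorter competitor to $\gamma_1$ is then not ``$\gamma_1$ with $Q$ replaced by $R$'' but $\gamma_{1,\ell}\,\overline{\ell}Rr\,\gamma_{1,r}$, where $\ell,r$ run through hyperplane carriers inside $D_1$, and Lemma~\ref{lem: piece length bound} is exactly the device that transfers the bound $|Q|_\p\geq p-4$ on $\lambda_1$ to the bound $|\gamma_1'|_\p\geq|\ell|_\p+|Q|_\p+|r|_\p-3$ on the corresponding subpath of the genuine piece-geodesic $\gamma_1$. This is a different role for Lemma~\ref{lem: piece length bound} than the one you assign it: the lemma takes $|Q|_\p\geq 3$ as an input hypothesis and transfers piece lengths across a square diagram; it does not convert a cone-level piece count into a $\dist_\p$ bound, so it does not resolve the ``$1$-cubes versus pieces'' obstacle you flag (the paper simply reads $|Q|_\p\geq p-4$ off Greendlinger).

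Two further points. In the ladder case, your claim that replacing subpaths by combinatorial geodesics within pseudo-grids ``does not increase piece length'' is unjustified: a subpath that is a single long piece has piece length $1$, while the combinatorial geodesic between its endpoints need not be a piece at all. What is true, and what the paper uses, is that $\dist_\p$-geodesics are combinatorial quasi-geodesics (Proposition~\ref{prop: qi}), so by hyperbolicity of $\widetilde X$ they uniformly fellow-travel the combinatorial geodesics (Corollary~\ref{cor:closeness of geodesics and quasigeodesics}); thinness of the modified ladder bigon then transfers to the original bigon with an additive constant. Finally, note that the paper cannot simply ``take a reduced diagram and then sandwich it'': it must build $D_1\cup D'\cup D_2$ first and track how the reduction moves interact with the seams $\lambda_i$ (the quintuple bookkeeping in the proof), since reductions can destroy the decomposition. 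Your proposal, by skipping the decomposition, inherits the corner problem above; repairing it essentially forces you to reconstruct the sandwich argument.
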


Before we proceed with the proof of the above theorem we introduce a construction that is used in the proof.
Let $Y \subset X$. The \emph{cubical convex hull} of $Y$ in $X$ is the smallest cubically convex subcomplex of $X$ contained in $Y$. That is, it is the smallest subcomplex $Hull(Y)$ satisfying that whenever a corner of an $n$-cube $c$ with $n \geq 2$ lies in $Hull(Y)$, then $c \subset Hull(Y)$.

\begin{construction}[Square pushes]\label{const:push} 
Let $D$ be a minimal complexity disc diagram, and let $\gamma\rho = \partial D$. Let $\lambda$ be a path with the same endpoints as $\gamma$ and lying in the cubical convex hull of $\gamma$, 
such that $\gamma\overline\lambda$ bounds a disc subdiagram $D_0$ of $D$ of maximal area. In particular, $D_0$ is a square disc diagram, and $D' = D - D_0$ is a disc diagram with $\partial D' = \lambda\rho$, which has no corners contained in the interior of the path $\lambda$. 
The diagram $D_0$ can be obtained via a finite sequence of \emph{square pushes}, i.e. a sequence of subdiagrams 
\[
\gamma = K_0 \subseteq K_1\subseteq \dots \subseteq K_{n-1} \subseteq K_n = D_0
\]
where for each $i = 0, \dots, n-1$ the subdiagram $K_{i+1}$ contains $K_i$ and an additional square $s$ such that at least two consecutive edges of $s$ are contained in $K_i$. Choosing a square $s$ and adding it to $K_i$ to obtain $K_{i+1}$ will be referred to as  \emph{pushing a square}.

Note that the sequence of diagrams $K_0, \ldots K_n$ is indeed finite, as  $Area(K_{i+1})=Area(K_{i})+1$ for each $i$, and thus $Area(D-K_{i+1})=Area(D-K_{i})-1$, so $n-1 \leq Area(D)$.

By construction, every dual curve $h$ in $D_0$ starting in $\gamma'$ must exit in $\gamma$. Indeed, every square $S$ that is being pushed has at least two consecutive edges on $\gamma$ (in the first step) or on some $K_i$ (in general). Thus, the $2$ dual curves emanating from $S$ either directly terminate on $\gamma$ or enter $K_i$, crossing some of the previously added squares. By induction on the area of $K_i$, we can thus conclude that these dual curves terminate on $\gamma$.
\end{construction}

\begin{construction}[Sandwich decomposition of a bigon]\label{const:sandwich}
Let $\gamma_1, \gamma_2$ be paths forming a bigon.
Let $D\to X^*$ be a reduced disc diagram with $\partial D = \gamma_1\overline \gamma_2$. 
We define a decomposition of $D$ into three (possibly singular) subdiagrams $D_1\cup D'\cup D_2$ by applying Construction~\ref{const:push} twice as follows:
\begin{itemize}
\item We first apply it to the subpath $\gamma_1\subseteq \partial D$ to obtain a decomposition $D = D_1\cup D''$ where $\partial D_1 = \gamma_1\overline\lambda_1$ and $\partial D'' =  \lambda_1 \overline\gamma_2$.
\item Then we apply it to the subpath $\gamma_2$ of $\partial D''$ and we obtain a decomposition $D'' = D_2\cup D'$ where $\partial D' =  \lambda_1 \overline\lambda_2$ and $\partial D_2 = \lambda_2\overline\gamma_2$.
\end{itemize}
See Figure~\ref{fig:notation} for an example. We note that $D_1, D_2$ are square diagrams.
\end{construction}

\begin{lem}\label{lem:balanced ladder}
Let $X^*= \langle X \mid Y_1,\dots, Y_s\rangle$ be a cubical presentation satisfying the $C(p)$ cubical small-cancellation condition for $p\geq 14$, 
where $X, Y_1, \dots, Y_s$ are compact, and $\pi_1 X$ is hyperbolic. 

Then for any weakly reduced disc diagram $(D, \partial D)\to (X^*, X)$ with $\partial D = \gamma_1\overline \gamma_2$ where $\gamma_1, \gamma_2$ are $\dist_\p$-geodesics and the subdiagram $D'$ obtained from its sandwich decomposition $D = D_1 \cup D' \cup D_2$ is a 
ladder.
\end{lem}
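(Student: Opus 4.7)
The plan is to argue by contradiction using the Cubical Greendlinger's Lemma (Lemma~\ref{lem:Greendlinger}). First, I would verify that $D'$ is weakly reduced: any reduction move of type \eqref{item:red1}--\eqref{item:red5} performable on $D'$ can be interpreted as a move on $D$, contradicting the weak reducedness of $D$. Suppose $D'$ is not a ladder. Then Lemma~\ref{lem:Greendlinger} yields at least three features among shells of degree $\leq 4$, corners, and spurs on $\partial D'$. Since $\partial D' = \lambda_1 \overline{\lambda_2}$ has only two bigon vertices, a counting argument produces a feature strictly interior to one of the arcs $\lambda_1, \lambda_2$. Interior corners on $\lambda_i$ are excluded by the maximal-area property of the square push in Construction~\ref{const:push}, since such a corner-square could be absorbed into $D_i$. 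Interior spurs correspond to backtracks in $\lambda_i$ and are ruled out by a preliminary cancellation (after which $\lambda_i$ is reduced). Thus we obtain a shell $C \subseteq D'$ of degree $\leq 4$ whose outerpath $Q$ lies in the interior of $\lambda_i$ for some $i \in \{1,2\}$. Writing $\partial C = RQ$ with $R$ the innerpath, the $C(p)$ condition with $p \geq 14$ combined with $|R|_\p \leq 4$ gives $|Q|_\p \geq p - 4 \geq 10$.

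Next I would transfer $Q$ to $\gamma_i$. By Construction~\ref{const:push}, every dual curve in $D_i$ starting on $\lambda_i$ exits on $\gamma_i$, so the dual curves from the edges of $Q$ define an injective correspondence with the edges of a subpath $Q' \subseteq \gamma_i$. Let $E \subseteq D_i$ be the subdiagram of minimal area with boundary decomposition $\partial E = \ell\, Q\, r\, \overline{Q'}$, where $\ell$ and $r$ are the sides carried by the extremal dual curves from the endpoints of $Q$. I would verify the hypotheses of Lemma~\ref{lem: piece length bound} for $E$: $Q'$ is a piece-geodesic as a subpath of $\gamma_i$; the square-push structure ensures no dual curve in $E$ crosses $\ell Q r$ twice; and cornsquares in the interiors of $\ell, Q, r$ are excluded --- on $Q$ by the maximal push on $\lambda_i$, and on $\ell, r$ by minimality of $E$. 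Since $|Q|_\p \geq 10 \geq 3$, Lemma~\ref{lem: piece length bound} yields
\[
|Q'|_\p \;\geq\; |\ell|_\p + |Q|_\p + |r|_\p - 3.
\]

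Finally, I would produce a strictly shorter path. Write $\gamma_i = A \cdot Q' \cdot B$ and define $\gamma_i' := A \cdot \overline{\ell} \cdot \overline{R} \cdot r \cdot B$, with orientations chosen so that the inserted segment has the endpoints of $Q'$. Since subpaths of a piece-geodesic are piece-geodesics, two applications of Lemma~\ref{lem: splitting paths} give $|A|_\p + |Q'|_\p + |B|_\p \leq |\gamma_i|_\p + 2$. Combining with $|R|_\p \leq 4$ and the previous inequality,
\[
|\gamma_i'|_\p \;\leq\; |A|_\p + |\ell|_\p + |R|_\p + |r|_\p + |B|_\p \;\leq\; |\gamma_i|_\p + 9 - |Q|_\p \;\leq\; |\gamma_i|_\p - 1,
\]
contradicting that $\gamma_i$ is a piece-geodesic. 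Hence $D'$ must be a ladder.

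The main technical obstacle is the verification of the hypotheses of Lemma~\ref{lem: piece length bound} for the auxiliary subdiagram $E$ --- specifically, showing that dual curves in $E$ do not cross $\ell Q r$ twice and that no cornsquares appear in the interiors of the side paths $\ell, r$. Both rely on the interplay between the square-push structure of $D_i$ (which forces dual curves from $\lambda_i$ to travel all the way to $\gamma_i$) and the minimality of $E$. A secondary subtlety is the handling of features that may cluster near the two bigon endpoints of $\partial D'$, but the large parameter $p \geq 14$ provides sufficient slack for the counting argument to produce an interior shell.
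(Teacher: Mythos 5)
Your proposal is correct and follows essentially the same route as the paper's proof: sandwich decomposition plus Greendlinger to locate an interior shell of degree $\leq 4$ with outerpath $Q$ on $\lambda_i$, transfer across $D_i$ via the dual curves from the square pushes into a subdiagram $E$ with boundary $\ell Q r \overline{Q'}$, apply Lemma~\ref{lem: piece length bound}, and contradict piece-geodesity of $\gamma_i$ using $|Q|_{\p}\geq p-4\geq 10$, $|R|_{\p}\leq 4$, and Lemma~\ref{lem: splitting paths}. The only slips are in justifying the hypotheses of Lemma~\ref{lem: piece length bound}: the absence of cornsquares of $E$ on $Q$ does not follow from the maximal push (which only controls corners on the $D'$ side of $\lambda_i$) but rather from weak reducedness of $D$ via move~\eqref{item:red4} (or, as in the paper, by re-choosing $Q$ within its cone), and ruling out a dual curve of $E$ running from $\ell$ to $r$ requires the additional observation that such a curve would exhibit $Q$ as a single wall-piece, contradicting $C(p)$.
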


The idea of the proof is as follows. Proceeding by contradiction, if $D'$ is not a ladder, then $D'$  contains a shell whose outerpath is disjoint from the endpoints $q, q'$ of $\gamma_1, \gamma_2$. Using this shell and Lemma~\ref{lem: piece length bound}, we construct a path $\gamma$ with endpoints $q$ and $q'$ and with shorter piece-length than $\gamma_1$, which contradicts the fact that $\gamma_1$ is a piece geodesic (see Figure~\ref{fig:notation}). 

\begin{proof}
Suppose to the contrary that $D'$ is not a 
ladder. We will derive a contradiction with the fact that $\gamma_1, \gamma_2$ are $\dist_\p$-geodesics. By Lemma~\ref{lem:Greendlinger}, 
$D'$ has at least three exposed cells, i.e.\ shells of degree $\leq 4$, corners and/or spurs. 
Two 
of those exposed cells might contain $q$ and $q'$, but there still must be at least one other exposed cell whose boundary path is disjoint from both $q$ and $q'$. By construction of $D'$ in Construction~\ref{const:sandwich}, there are no corners or spurs contained in the interior of the paths $\gamma_1$ and $\gamma_2$, so we conclude that there must be a shell $S$ of degree $\leq 4$ in $D'$ with the outerpath $Q$ contained in $\gamma_1$ or $\gamma_2$.
Up to switching names of $\gamma_1$ and $\gamma_2$, we can assume that $Q$ is contained in $\gamma_1$. 
Let $R$ denote the innerpath of $S$ in $D'$.

Let $e_\ell$ and $e_r$ be the leftmost (first) and the rightmost (last) edge of $R$, and let $ h_\ell,  h_r$ be their dual curves in $D_1$. By Construction~\ref{const:push} $h_\ell,  h_r$ exit $D_1$ in $\gamma_1$. Let $\gamma'_1$ be the minimal subpath of $\gamma_1$ that contains the edges dual to $h_\ell,  h_r$.

Let $H_{\ell}, H_{r}$ be the hyperplanes of $\widehat X$ extending $h_{\ell}, h_r$ respectively. 
Let $\ell, r$ be combinatorial paths in $D_1$ parallel to $h_\ell,  h_r$ and starting at the two endpoints of the path $Q$, respectively.

Consider a minimal complexity square disc diagram $E$ with boundary $\partial E=\ell Q r \overline \gamma'_1$ where $\ell$ and $r$ are combinatorial paths contained in $N(H_{\ell}), N(H_r)$. In particular, $\ell$ and $r$ do not intersect $H_\ell$ and $H_r$ respectively. Such a diagram $E$ exists since we can choose a subdiagram of $D_1$. Amongst all possible choices of $\ell, r$ and $E$ we pick a diagram with minimal area. A feature of the choice of $E$ is that it has no cornsquares in the interiors of $\ell$ and $r$, as otherwise we could push that cornsquare out and reduce the area. Up to possibly replacing $Q$ with another path with the same endpoints contained in the same cone, we can assume that $Q$ has no cornsquares either. We will assume that this is the case for the remainder of the proof. 

We will be applying Lemma~\ref{lem: piece length bound} to $E$, so we first verify that the assumptions are satisfied. By Lemma~\ref{lem:Greendlinger}, $|Q|_\p \geq p-4> 3$.
Next, we claim that every dual curve starting in $\ell Q r$ exits $E$ in $\gamma'_1$. The cases of dual curves starting in $\ell$ and $r$ are analogous, so we only explain the argument for $\ell$. Consider the subdiagram $E'=E\cup S$ of $D'$. Let $e$ be an edge in $\ell$. 
Note that the dual curve $h$ to $e$ in $E'$ cannot terminate on $Q$, since this would imply that there is a cornsquare on $Q$. If $h$ terminates on $r$, then $h$ is parallel to $Q$, and therefore $Q$ is a single wall-piece, contradicting the $C(p)$ condition.
Thus, $h$ must terminate on $\gamma'_1$. 
Let now $e$ be an edge of $Q$, and $h$ its dual curve in $E$. We already know that $h$ cannot exit $E$ in $\ell$ or $r$. If $h$ exited $E$ in $Q$, it would either yield a cornsquare in the interior of $Q$, contradicting the choice or $Q$, or it would yield a bigon formed from $2$ squares glued along a pair of adjacent edges, contradicting the minimal complexity of $E$.

Since no dual curve in $E$ crosses $\ell Q r$ twice, there are no cornsquares in none of $\ell$, $Q$, and $r$, and $|Q|_\p \geq 3$, Lemma~\ref{lem: piece length bound} implies that $|Q|_\p + |\ell|_\p + |r|_\p  -3 \leq |\gamma'_1|_\p$. Recall that $R$ is the innerpath of the shell $S$ of degree $\leq 4$ in $D'$.
By definition $|R|_\p\leq 4$, and so the $C(p)$ condition with $p\geq 14$ for $X^*$ implies that $|Q|_\p\geq p - 4\geq 10> |R|_\p + 5$. 
Combining the two inequalities we get $$|\overline\ell R r|_\p\leq  |R|_\p + |\ell|_\p + |r|_\p <  (|Q|_\p -5) + |\ell|_\p + |r|_\p \leq |\gamma'_1|_\p-2.$$
In particular, if we write $\gamma_1 = \gamma_{1,\ell}\gamma'_1\gamma_{1,r}$, then using Lemma~\ref{lem: splitting paths}  we get 
$$|\gamma_1|_\p\geq |\gamma_{1,\ell}|_\p+|\gamma'_1|_\p +|\gamma_{1,r}|_\p -2 > |\gamma_{1,\ell}|_\p+|\overline\ell R r|_\p +|\gamma_{1,r}|_\p  \geq  |\gamma_{1,\ell}\overline\ell R r\gamma_{1,r}|_\p $$ which contradicts the fact that $\gamma_1$ is a piece-geodesic, completing the proof.
\begin{figure}
\centerline{\includegraphics[scale=0.5]{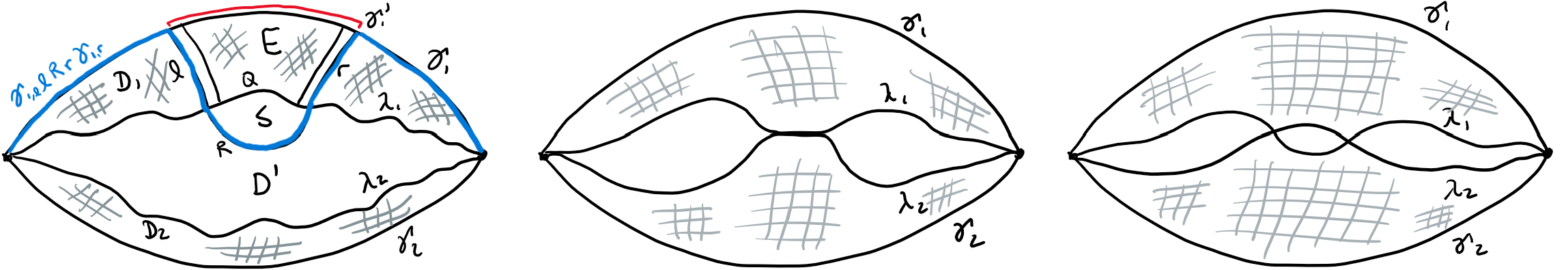}}
\caption{On the left, notation in the proof of Lemma~\ref{lem:balanced ladder}; on the centre, possible overlapping paths in the proof of~\ref{thm: hyperbolicity of C(p)}; on the right, the impossible transversal intersections described in the proof of~\ref{thm: hyperbolicity of C(p)}.}
\label{fig:notation}
\end{figure} 
\end{proof}

We now combine the previous ingredients to finish the proof of Theorem~\ref{thm: hyperbolicity of C(p)}. Here is an outline of the proof. We consider a piece-geodesic bigon and want to prove that it is thin in the piece-metric. We apply the reduction moves from Definition~\ref{defn:reduction moves} to obtain a new bigon. We then apply Lemma~\ref{lem:balanced ladder} to show that the middle layer of its sandwich decomposition is a ladder, and thus it is thin by Proposition~\ref{prop: thin ladder}. Finally, we use Corollary~\ref{cor:closeness of geodesics and quasigeodesics} to deduce that the other layers are also thin, and consequently the original bigon is thin.

\begin{proof}[Proof of Theorem~\ref{thm: hyperbolicity of C(p)}] We prove that the coned-off space $(\widehat X^{(0)}, \dist_\p)$ is $\delta'$-hyperbolic for some $\delta'$ by showing that it satisfies the bigon criterion (Proposition~\ref{prop:thin bigon criterion}). 

Let $\gamma_1, \gamma_2$ be $\dist_\p$-geodesic segments forming a bigon. Pick combinatorial geodesics $\lambda_1, \lambda_2$ such that   $\gamma_1\overline \lambda_1$ and $\gamma_2\overline \lambda_2$ bound square disc diagrams $D_1$ and $D_2$, as they are path-homotopic in $\widehat X$. Let $D'\to X^*$ be a reduced disc diagram with boundary $\partial D' = \lambda_1\overline\lambda_2$. 
By glueing $D_1\cup D' \cup D_2$ along $\lambda_1$ and $\lambda_2$ respectively, we obtain a (possibly non weakly reduced) disc diagram $D\to X^*$ with $\partial D = \gamma_1\overline\gamma_2$.

By Lemma~\ref{lem:reductions}, there exists a sequence of reduction moves (1) - (5) from Definition~\ref{defn:reduction moves} that turns $D$ into a weakly reduced disc diagram. We describe how each reduction move transforms a quintuple $(D, \lambda_1, \lambda_1', \lambda_2, \lambda_2')$ into a quintuple $(\ddot D, \ddot \lambda_1, \ddot \lambda_1', \ddot \lambda_2, \ddot\lambda_2')$, both satisfying conditions below. 
Our sequence starts with $(D, \lambda_1, \lambda_1', \lambda_2, \lambda_2') = (D, \lambda_1, \lambda_1, \lambda_2, \lambda_2)$.
At each step $(D, \lambda_1, \lambda_1', \lambda_2, \lambda_2')$ satisfies:
\begin{enumerate}[(a)]
\item $D$ is a disc diagram with $\partial D = \gamma_1\overline\gamma_2$, 
\item each $\lambda_i$ is an embedded combinatorial path in $D_\square$, such that the three subdiagrams $D_1$, $D_2$ and $D'$ of $D$ with boundaries $\gamma_1\overline\lambda_1$, $\lambda_2\overline\gamma_2$ and $\lambda_1\overline\lambda_2$ respectively,
 are reduced, and $D_1, D_2$ are square diagrams,
 \item each $\lambda_i'$ is an embedded combinatorial path in $D$  (not necessarily in $D_\square$) such that $\lambda_i'$ is a combinatorial geodesic in $\widetilde X^*$, and $\lambda_i'\cap D_\square = \lambda_i \cap D_\square$,
\end{enumerate}
and after applying a reduction move, we obtain a new quintuple $(\ddot D, \ddot \lambda_1, \ddot \lambda_1', \ddot \lambda_2, \ddot\lambda_2')$ satisfying the above conditions.

Since the subdiagrams $D_1, D_2$ are square disc diagrams and $D'$ is reduced, we never apply the reduction moves~\eqref{item:red2} and~\eqref{item:red3}, since those would have to be performed within $D'$, contradicting that $D'$ is reduced.

We now describe the transformation from $\lambda_i$ to $\ddot\lambda_i$ and from $\lambda_i'$ to $\ddot\lambda_i'$, for each reduction move. In each case the change will occur only within a subdiagram $B$ that is transformed to $\ddot B$ by the reduction.  See Figure~\ref{fig:bigon reduction}. We assume $\lambda_i$ intersects the interior of $B$. We note that it might happen that both $\lambda_1, \lambda_2$ intersect $B$, in which case we apply the transformations to both $\lambda_1, \lambda_1'$ and $\lambda_2, \lambda_2'$, according to the rules described below. It might happen that paths $\lambda_1, \lambda_2$ overlap, but at no step they intersect transversally, i.e.\ at each step they yield a decomposition of the diagram $D$ into $D_1\cup D'\cup D_2$. See Figure~\ref{fig:notation}.

\begin{figure}
\includegraphics[scale=0.7]{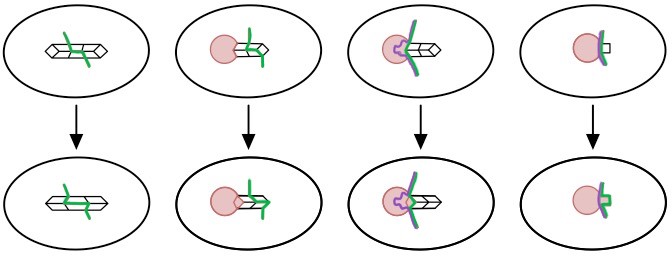}
\caption{Reduction moves~\eqref{item:red1}, ~\eqref{item:red4}, ~\eqref{item:red4}, ~\eqref{item:red5}. In the first two cases, the path $\lambda_i = \lambda_i'$ is reduced to the path $\ddot\lambda_i = \ddot\lambda_i'$. In the last two case, $\ddot\lambda_i'=\lambda_i'$ (purple paths), but $\ddot \lambda_i$ differs from $\lambda_i$.}\label{fig:bigon reduction}
\end{figure}

Consider reduction move~\eqref{item:red1}. Let $B$ be a bigon-subdiagram of $D$, which is to be reduced. Since this reduction move involves only squares, we have ${\lambda_i}\cap{B} = {\lambda_i'}\cap{B}$. Note that  ${\lambda_i}\cap{B}$ cannot join the two corners of $B$ since $\lambda_i'$ is a combinatorial geodesic. Thus ${\lambda_i}\cap{B}$ must be a combinatorial geodesic crossing both dual curves associated to $B$. 
For each $\lambda_i\cap{B}$ we set $\ddot\lambda_i\cap {\ddot B} = \ddot \lambda_i'\cap {\ddot B}$ to the combinatorial geodesic with the same endpoints in $\ddot B$ and maximizing the area of $D_i$. See the first diagram in Figure~\ref{fig:bigon reduction}.
Thus the reduction move yields a new disc diagram $\ddot D$ with new paths $\ddot\lambda_i, \ddot\lambda_i'$ for $i=1,2$ satisfying the required conditions. 
We note that in the case where both $\lambda_1, \lambda_2$ intersect $B$, the choices of $\ddot\lambda_1, \ddot \lambda_2$ ensure that $\ddot\lambda_1, \ddot \lambda_2$ do not intersect transversally. 

Consider reduction move~\eqref{item:red4}. Let $B$ be a subdiagram associated to a cornsquare $s$ and its dual curves ending on a cone-cell $C$.
 We set $\ddot \lambda_i\cap {\ddot B}$ to the combinatorial path with the same length and endpoints in $\ddot B$ and maximizing the area of $D_i$.
 If $\lambda_i'$ coincides with $\lambda_i$ in $B$ we set $\ddot \lambda_i'\cap{\ddot B} = \ddot \lambda_i\cap{\ddot B}$. See the second diagram in Figure~\ref{fig:bigon reduction}.
Otherwise, we set $\ddot \lambda_i'\cap{\ddot B} = \lambda_i'\cap{B}$.  See the third diagram in Figure~\ref{fig:bigon reduction}. Again, in the case where both $\lambda_1, \lambda_2$ intersect $B$, the choices of transformed paths ensure that no transversal intersection occurs. 

Finally, consider reduction move~\eqref{item:red5}. Let $B$ be a subdiagram consisting of a square $s$ overlapping with a cone-cell $C$ along a single edge $e$.  Thus $\lambda_i\cap {B} = e$.  
We set $\ddot \lambda_i\cap{\ddot B}$ to the path $\partial s-e$.
We also set ${\ddot \lambda_i'}\cap\ddot B = {\lambda_i'}\cap B$. 
 See the last diagram in Figure~\ref{fig:bigon reduction}.

\textbf{Working under the  assumption of weakly reduced:} We now assume that  $(D,\lambda_1, \lambda_1',\lambda_2, \lambda_2')$ satisfies conditions (a)-(c) above and that $D$ is weakly reduced. Following the notation in (b),
we claim that either $D_1\cup D' \cup D_2$ is the sandwich decomposition of $D$, or  we can push squares into $D_1$ and $D_2$ modifying $\lambda_i, \lambda_i'$ while preserving conditions (a)-(c). Since $\lambda_i \cap  D_\square=\lambda'_i \cap D_\square$ and $\lambda_i'$ is a geodesic in $\widetilde{X^*}$,  no square $s$ in $ D'_\square$ has three sides on $\lambda_i$.  So if  a square $s$ on $\lambda_i$  can be pushed into $D_i$, then $s$ must have two consecutive edges $a,b$ on $\lambda_i$.
 Let $\lambda_i=\ell_iabr_i$ where $\ell_i, r_i$ are  subpaths of $\lambda_i-ab$. Likewise, let  $\lambda'_i=\ell'_iabr'_i$. Finally, define $\ddot \lambda_i= \ell_icd  r_i$ and $\ddot \lambda'_i= \ell'_icd  r'_i$ where $c,d$ are the other two edges of $s$. The quintuple $(D,\ddot \lambda_1, \ddot \lambda_1',\ddot \lambda_2, \ddot\lambda_2')$ satisfies conditions (a)-(c). Indeed, since  $|\lambda'_i|=|\ddot \lambda'_i|$, condition (c) is still satisfied. As this replacement does not affect $D$, nor the property of being (weakly) reduced, nor that $D_1$ and $D_2$ are square diagrams,  conditions (a) and (b) are also preserved. We arrive at the  sandwich decomposition after finitely many square-pushes.

\textbf{Working under the further assumption that $D_1\cup D' \cup D_2$ is the sandwich decomposition of $D$:} For the remainder of the proof, we assume   that  $(D,\lambda_1, \lambda_1',\lambda_2, \lambda_2')$ satisfies conditions (a)-(c)  and that $D$ is weakly reduced, and that the associated $D_1\cup D' \cup D_2$ is the sandwich decomposition of $D$.
By Lemma~\ref{lem:balanced ladder}, the subdiagram $D'$ with $\partial D' = \lambda_1\overline\lambda_2$ is a %(balanced) 
ladder. Let $D''$ be the subdiagram of $D'$ with $\partial D''=\lambda_1'\overline\lambda_2'$. Then $D''$ is also a ladder, since $\lambda_i\cap  D'=\lambda'_i\cap D'$. By Proposition~\ref{prop: thin ladder}, the bigon  $\lambda'_1, \lambda'_2$ is $\epsilon$-thin for a uniform constant $\epsilon$. 
By Corollary~\ref{cor:closeness of geodesics and quasigeodesics} there exists a uniform constant $M$ such that $\gamma_i, \lambda'_i$ is $M$-thin. Consequently,  $\gamma_1, \gamma_2$ is $(\epsilon + 2M)$-thin.
\end{proof}

%%%%%%%%%%%%%%%%%%%%%%%%%%%%%%%%%%%%%%%%%%%%%%%%%%%%%%%%%%%%%%%%%%%%%%%%
%%                  BIBLIOGRAPHY
%%%%%%%%%%%%%%%%%%%%%%%%%%%%%%%%%%%%%%%%%%%%%%%%%%%%%%%%%%%%%%%%%%%%%%%%
\bibliographystyle{alpha}
%\bibliographystyle{plain}
%\bibliography{C:/users/dani/dropbox/papers/wise}
\bibliography{bib9.bib}

\def\cprime{$'$} \def\polhk#1{\setbox0=\hbox{#1}{\ooalign{\hidewidth
  \lower1.5ex\hbox{`}\hidewidth\crcr\unhbox0}}} \def\cprime{$'$}
  \def\cprime{$'$} \def\polhk#1{\setbox0=\hbox{#1}{\ooalign{\hidewidth
  \lower1.5ex\hbox{`}\hidewidth\crcr\unhbox0}}}
\begin{thebibliography}{Are23b}

\bibitem[Ago13]{AgolGrovesManning2012}
Ian Agol.
\newblock The virtual {H}aken conjecture.
\newblock {\em Doc. Math.}, 18:1045--1087, 2013.
\newblock With an appendix by Agol, Daniel Groves, and Jason Manning.

\bibitem[AH22]{ArzhantsevaHagen16}
Goulnara~N. Arzhantseva and Mark~F. Hagen.
\newblock Acylindrical hyperbolicity of cubical small cancellation groups.
\newblock {\em Algebr. Geom. Topol.}, 22(5):2007--2078, 2022.

\bibitem[Are]{Arenas2023pi}
Macarena Arenas.
\newblock Asphericity of cubical presentations: the general case.
\newblock {\em in preparation}.

\bibitem[Are23a]{Arenas2023pi2}
Macarena Arenas.
\newblock Asphericity of cubical presentations: The 2-dimensional case.
\newblock {\em International Mathematics Research Notices}, aug 2023.

\bibitem[Are23b]{Arenas22}
Macarena Arenas.
\newblock A cubical rips construction.
\newblock 2023.
\newblock To appear in Algebraic \& Geometric Topology. Available at
  https://arxiv.org/pdf/2202.01048.pdf.

\bibitem[BH99]{BridsonHaefliger}
Martin~R. Bridson and Andr{\'e} Haefliger.
\newblock {\em Metric spaces of non-positive curvature}.
\newblock Springer-Verlag, Berlin, 1999.

\bibitem[FW16]{FuterWise2016}
David Futer and Daniel~T. Wise.
\newblock Cubulating random quotients of cubulated hyperbolic groups.
\newblock pages 1--20, 2016.

\bibitem[HW22]{HuangWiseSpecial}
Jingyin Huang and Daniel~T. Wise.
\newblock Virtual specialness of certain graphs of special cube complexes.
\newblock 2022.

\bibitem[Jan20]{Jankiewicz17}
Kasia Jankiewicz.
\newblock Lower bounds on cubical dimension of {$C'(1/6)$} groups.
\newblock {\em Proc. Amer. Math. Soc.}, 148(8):3293--3306, 2020.

\bibitem[JW22]{JankiewiczWise18}
Kasia Jankiewicz and Daniel~T. Wise.
\newblock Cubulating small cancellation free products.
\newblock {\em Indiana Univ. Math. J.}, 71(4):1397--1409, 2022.

\bibitem[Lea13]{Leary_KanThurston}
Ian~J. Leary.
\newblock A metric {K}an-{T}hurston theorem.
\newblock {\em J. Topol.}, 6(1):251--284, 2013.

\bibitem[LS77]{LS77}
Roger~C. Lyndon and Paul~E. Schupp.
\newblock {\em Combinatorial group theory}.
\newblock Springer-Verlag, Berlin, 1977.
\newblock Ergebnisse der Mathematik und ihrer Grenz\-gebiete, Band 89.

\bibitem[Lyn66]{Lyn66}
Roger~C. Lyndon.
\newblock On {D}ehn's algorithm.
\newblock {\em Math. Ann.}, 166:208--228, 1966.

\bibitem[Pap95]{Papasoglu95}
P.~Papasoglu.
\newblock Strongly geodesically automatic groups are hyperbolic.
\newblock {\em Invent. Math.}, 121(2):323--334, 1995.

\bibitem[PW18]{PW18}
Piotr Przytycki and Daniel~T. Wise.
\newblock Mixed 3-manifolds are virtually special.
\newblock {\em J. Amer. Math. Soc.}, 31(2):319--347, 2018.

\bibitem[Sag95]{Sageev95}
Michah Sageev.
\newblock Ends of group pairs and non-positively curved cube complexes.
\newblock {\em Proc. London Math. Soc. (3)}, 71(3):585--617, 1995.

\bibitem[Wis21]{WiseIsraelHierarchy}
Daniel~T. Wise.
\newblock {\em The structure of groups with a quasiconvex hierarchy}, volume
  209 of {\em Annals of Mathematics Studies}.
\newblock Princeton University Press, Princeton, NJ, 2021.

\end{thebibliography}
%\bibliography{wise}

%HOME
%%%%%%%%%%%%%%%%%%%%%%%%%%%%%%%%%%%%%%%%%%%%%%%%%%%%%%%%%%%%%%%%%%%%%%%%%%%%%%%%%%%%%%%%%%%
%
%
\end{document}